\def\url@leostyle{%
 \@ifundefined{selectfont}{\def\UrlFont{\sf}}{\def\UrlFont{\scriptsize\ttfamily}}} \makeatother\urlstyle{leo}
\newtheorem{theorem}{Theorem}
\newtheorem{proposition}[theorem]{Proposition}
\newtheorem{lemma}[theorem]{Lemma}
\theoremstyle{definition}
\theoremstyle{remark}
\newtheorem{remark}[theorem]{Remark}
\numberwithin{equation}{section}
\numberwithin{theorem}{section}
\definecolor{Red}{rgb}{0.9,0,0.0}
\definecolor{Blue}{rgb}{0,0.0,1.0}
\def\cA{\mathcal{A}}
\def\cP{\mathcal{P}}
\def\cQ{\mathcal{Q}}
\def\cT{\mathcal{T}}
\def\bE{\mathbb{E}}
\def\bF{\mathbb{F}}
\def\bN{\mathbb{N}}
\def\bP{\mathbb{P}}
\def\bQ{\mathbb{Q}}
\def\bR{\mathbb{R}}
\def\sF{\mathscr{F}}
\newcommand{\set}[1]{\{#1\}}            
\renewcommand{\mid}{\;|\;}              
\DeclareMathOperator{\var}{\mathrm{V}@\mathrm{R}}           
\DeclareMathOperator{\glr}{\mathrm{GLR}}                    
\title{ \vspace{-3em} 
    Adaptive Robust Control Under Model Uncertainty
}
\def\and{%
  \end{tabular}%
  \begin{tabular}[t]{c}}%
\def\@fnsymbol#1{\ensuremath{\ifcase#1\or a\or b\or c\or
   d\or e\or f\or g\or h\or i\else\@ctrerr\fi}}
\author{
        Tomasz R. Bielecki\,\thanks{Department of Applied Mathematics, Illinois Institute of Technology
       \newline \hspace*{1.45em}  Emails: \url{tbielecki@iit.edu} (T. R. Bielecki), \url{tchen29@iit.edu} (T. Chen) and \url{cialenco@iit.edu} (I. Cialenco)
       \newline \hspace*{1.45em}  URLs: \url{http://math.iit.edu/\~bielecki}  and \url{http://math.iit.edu/\~igor}
        \vspace{0.5em}} ,
\and
        Tao Chen,\,\footnotemark[1] \newline
\and
         Igor Cialenco,\,\footnotemark[1] \newline
\and
        Areski Cousin\,\thanks{
        Institut de Sciences Financi\`{e}re et d'Assurance, Universit\'{e} Lyon 1, Lyon, France
         \newline \hspace*{1.45em}  Email: \url{areski.cousin@univ-lyon1.fr},  URL: \url{http://www.acousin.net/}
         \vspace{0.5em}} ,
\and
        Monique Jeanblanc\,\thanks{
        Univ Evry, LaMME UMR CNRS 807,  Universit\'e Paris-Saclay, 91025, Evry, France
         \newline \hspace*{1.45em}  Email: \url{monique.jeanblanc@univ-evry.fr},
         URL: \url{http://www.maths.univ-evry.fr/pages_perso/jeanblanc}
         }
        }
\date{ {\small 
First Circulated: June 06, 2017
}}
\begin{document}

\maketitle

{\footnotesize
\begin{tabular}{l@{} p{350pt}}
  \hline \\[-.2em]
  \textsc{Abstract}: \ & In this paper we propose a new methodology for solving an uncertain stochastic Markovian control problem in discrete time. We call the proposed methodology the adaptive robust control. We demonstrate that the uncertain control problem under consideration can be solved in terms of associated adaptive robust Bellman equation. The success of our approach is to the great extend owed to the recursive methodology for construction of relevant confidence regions. We illustrate our methodology by considering an optimal portfolio allocation problem, and we compare results obtained using the adaptive robust control method with some other existing methods. \\[1em]
\textsc{Keywords:} \ & adaptive robust control, model uncertainty, stochastic control, dynamic programming, recursive confidence regions, Markov control problem, portfolio allocation.
 \\
\textsc{MSC2010:} \ & 93E20, 93E35, 49L20, 60J05  \\[1em]
  \hline
\end{tabular}
}


\section{Introduction}
In this paper we propose a new methodology for solving an uncertain stochastic Markovian control problem in discrete time, which is then applied to an optimal portfolio selection problem. Accordingly, we only consider terminal optimization criterion.

The uncertainty in the problem comes from the fact that the (true) law of the underlying stochastic process is not known. What is known, we assume, is the family of potential probability laws that the true law belongs to. Thus, we are dealing here with a stochastic control problem subject to Knightian uncertainty.

Such problems have been extensively studied in the literature, using different approaches, some of them  are briefly described in  Section~\ref{sec:ocsmu}.

The classical approach to the problem goes back to several authors. The multiple-priors, or the maxmin approach, of \cite{GilboaSchmeidler1989} is probably one of the first ones and one of the best-known in the economics literature. In the context of our terminal criterion, it essentially amounts to a robust control problem (a game problem, in fact) of the form
\[
\sup_{\varphi\in \mathcal{A}}\inf_{\bQ\in \mathcal{M}}\bE_\bQ\, \mathcal{L}(X,\varphi,T),
\]
where $\varphi$ is the control process,  $\mathcal{A}$ is the family of admissible controls, $\mathcal{M}$ is a family of possible underlying probabilistic models (or priors),  $\bE_\bQ$ denotes expectation under the prior $\bQ$, $X$ is the underlying process, $T$ is the finite optimization horizon, and where $\mathcal{L}$ is an optimization criterion. The family $\mathcal{M}$ represents the Knightian uncertainty.

The above maxmin formulation has been further modified to the effect of
\[
\sup_{\varphi\in \mathcal{A}}\inf_{Q\in \mathcal{M}}\bE_\bQ \left (\mathcal{L}(X,\varphi,T)-c(\bQ)\right ),
\]
where $c$ is a penalty function. We refer to, e.g.,   \cite{HansenSargent2006}, \cite{Skiadas2003}, \cite{MaccheroniMarinacciRustichini2006} and \cite{BordigoniMatoussiSchweizer2007}, and references therein,  for discussion and various studies of this problem.

In our approach we do not use the penalty term. Instead, we apply a learning algorithm that is meant to reduce the uncertainty about the true probabilistic structure underlying the evolution of the process $X$. This leads us to consider what we call \textit{adaptive robust control problem}. We stress that our problem and  approach should not be confused with the problems and approaches presented in \cite{IoannouSunBook2012} and \cite{BertuccelliWuHow2012}.

A very interesting study of an uncertain control problem in continuous time, that involves learning,  has been done in \cite{KallbladOblojTZ2014}.

The paper is organized as follows. In Section~\ref{sec:ocsmu} we briefly review some of the existing methodologies of solving stochastic control problems subject to model uncertainty, starting with robust control method, and continuing with strong robust method, model free robust method, Bayesian  adaptive control method, and adaptive control method. Also here we introduce the underlying idea of the proposed method, called the adaptive robust control, and its relationship with the existing ones. Section~\ref{sec:robust} is dedicated to the adaptive robust control methodology. We begin with setting up the model and in Section~\ref{sec:formulation} we formulate in strict terms the stochastic control problem. The solution of the adaptive robust problem is discussed in Section~\ref{sec:SolutionAdapRobust}. Also in this section, we derive the associated Bellman equation and prove the Bellman principle of optimality for the considered  adaptive robust problem. Finally, in Section~\ref{sec:examples} we consider an illustrative example, namely, the classical dynamic optimal allocation problem when the investor is deciding at each time on investing in a risky asset and a risk-free banking account by maximizing the expected utility of the terminal wealth. Also here, we give a comparative analysis between the proposed method and some of the existing classical methods.

\section{Stochastic Control Subject to Model Uncertainty}\label{sec:ocsmu}

Let  $(\Omega, \sF)$ be a measurable space,  and $T\in \bN$ be a fixed time horizon. Let $\cT=\set{0,1,2,\ldots,T}$, $\cT'=\set{0,1,2,\ldots,T-1}$, and $\boldsymbol \Theta\subset \bR^d$   be a non-empty set, which will play the role of the known parameter space throughout.\footnote{In general, the parameter space may be infinite dimensional, consisting for example of dynamic factors, such as deterministic functions of time or hidden Markov chains. In this study, for simplicity, we chose the parameter space to be a subset of $\bR^d$. In most applications, in order to avoid problems with constrained estimation, the parameter space is taken to be equal to the maximal relevant
subset of $\bR^d$. }

On the space $(\Omega, \sF)$ we consider a random process $X=\{X_t,\ t\in \cT\}$ taking values in some measurable space. We postulate that this process is observed, and we denote by ${\mathbb {F}}=(\sF_t,t\in \cT)$ its natural filtration. The true law of $X$  is unknown and assumed to be generated by a probability measure belonging to a parameterized family of probability distributions on $(\Omega, \sF)$, say $\mathbf{P}(\boldsymbol \Theta)=\{\bP_\theta,  \theta\in \boldsymbol \Theta\}$.   We will write $\bE_\bP$ to denote the expectation corresponding to a probability measure $\bP$ on  $(\Omega, \sF)$, and, for simplicity, we denote by $\bE_\theta$ the expectation operator corresponding to  the probability $\bP_\theta$.

By $\bP_{\theta^*}$ we denote the measure generating the true law of $X$, so that $\theta ^*\in \boldsymbol
\Theta$ is the (unknown) true parameter. Since $\boldsymbol \Theta$ is assumed to be known, the model uncertainty discussed in this paper occurs only if  $\boldsymbol \Theta \ne \{\theta^*\},$ which we
assume to be the case.

The methodology proposed  in this paper is motivated by the following generic optimization problem: we consider a family, say $\cA$, of ${\mathbb {F}}$--adapted processes $\varphi=\{ \varphi_t,\ t\in \cT\}$ defined on $(\Omega,\sF)$, that take values in a measurable space. We refer to the elements of $\cA$ as to admissible control processes. Additionally, we consider a functional of $X$ and $\varphi$, which we denote by $L.$ The stochastic control problem at hand is
\begin{equation}\label{eq:gen}
\inf_{\varphi\in \cA} \bE_{\theta^*}\left( L(X,\varphi)\right ).
\end{equation}
However, stated as such, the problem can not be dealt with directly, since the value of $\theta^*$ is unknown. Because of this we refer to such problem as to an \textit{uncertain stochastic control problem.} The question  is then how to handle the stochastic control problem \eqref{eq:gen}  subject to this type of model uncertainty.

The classical approaches to solving this uncertain stochastic control problem are:
\begin{itemize}
\item \textit{to solve the robust control problem}
\begin{align}\label{eq:rob}
  \inf_{\varphi\in\cA} \sup_{\theta\in \boldsymbol \Theta} \bE_{\theta} \left( L(X,\varphi)\right ).
\end{align}
We refer to, e.g., \cite{HansenSargent2006},  \cite{HansenBookBook2008}, \cite{BasarBernhardBook1995}, for more information regarding robust control problems.

\item \textit{to solve the strong robust control problem}
\begin{align}\label{eq:rob2}
  \inf_{\varphi\in\cA} \sup_{\mathbb Q\in {\mathcal Q}^{\varphi,{\boldsymbol \Theta}^K}_\nu} \bE_{\mathbb Q} \left( L(X,\varphi)\right ),
\end{align}
where   ${\boldsymbol \Theta}^K$ is the set of strategies chosen by a Knightian adversary (the nature) and ${\mathcal Q}^{\varphi,{\boldsymbol \Theta}^K}_\nu$ a set of probabilities,  depending on the strategy $\varphi$ and a given law $\nu$ on $\boldsymbol \Theta$. See Section~\ref{sec:robust} for a formal description of this problem, and see, e.g., \cite{Sirbu2014} and  \cite{BayraktarCossoPham2014} for related work.

\item \textit{to solve the model free robust control problem}
\begin{align}\label{eq:free}
  \inf_{\varphi\in\cA} \sup_{\bP\in \cP} \bE_{\bP} \left( L(X,\varphi)\right ),
\end{align}
where $\cP$ is given family of probability measures on $(\Omega,\sF).$

\item \textit{to solve a Bayesian adaptive control problem,} where  it is assumed that the (unknown) parameter $\theta$ is random, modeled as a random variable $\Theta$ taking values in $\boldsymbol \Theta$ and having a prior distribution denoted by $\nu_0$.
In this framework, the uncertain control problem  is solved via the following optimization problem
\begin{align*}
  \inf_{\varphi\in\cA} \int_{\boldsymbol \Theta} \bE_\theta  \left(  L(X,\varphi)  \right) \nu_0(d\theta).
\end{align*}
We refer to, e.g., \cite{KumarVaraiya2015Book}.

\item \textit{to solve an adaptive control problem,} namely, first for each $\theta \in \boldsymbol \Theta$ solve
\begin{equation}\label{eq:acp}
\inf_{\varphi\in \cA} \bE_\theta\left( L(X,\varphi)\right ),
\end{equation}
and denote by $\varphi ^\theta$ a corresponding optimal control (assumed to exist). Then, at each time $t\in \cT'$, compute a point estimate $\widehat \theta_t$ of $\theta ^*$, using a chosen, $\sF_t$ measurable estimator {$\widehat \Theta _t$}. Finally, apply at time $t$ the control value $\varphi^{\widehat \theta_t}_t$. We refer to, e.g., \cite{KumarVaraiya2015Book}, \cite{ChenGuo1991-Book}.
\end{itemize}

Several comments are now in order:
\begin{enumerate}
\item Regarding the solution of the robust control problem, \cite{Lim2006}  observe that
\begin{mdframed}[leftmargin=5pt, innerleftmargin = 30pt, hidealllines = true]
\textit{If the true model\footnote{True model in \cite{Lim2006} corresponds to  $\theta ^*$ in our notation.} is the worst one, then this solution will be nice and dandy. However, if the true model is the best one or something close to it, this solution could be very bad (that is, the solution need not be robust to model error at all!).}
\end{mdframed}
The message is that using the robust control framework may produce undesirable results.

\item It  can be shown that
\begin{equation}\label{eq:robadap}
\inf_{\varphi\in\cA } \sup_{\theta\in \boldsymbol \Theta} \bE_\theta\left(  L(S,\varphi)\right )    ) = \inf_{\varphi\in\cA }\sup_{\nu_0\in\cP(\boldsymbol \Theta)} \int_{\boldsymbol \Theta} \bE_\theta\left(  L(S,\varphi)    \right ) \nu_0(d\theta).
\end{equation}

Thus, for any given prior distribution $\nu_0$
$$
\inf_{\varphi\in\cA } \sup_{\theta\in \boldsymbol \Theta} \bE_\theta\left(  L(S,\varphi)   \right ) \geq \inf_{\varphi\in\cA } \int_{\boldsymbol \Theta} \bE_\theta\left(  L(S,\varphi)   \right ) \nu_0(d\theta).
$$
The adaptive Bayesian problem appears to be less conservative. Thus, in principle, solving the adaptive Bayesian control problem for a given prior distribution may lead to a better solution of the  uncertain stochastic control problem than solving the robust control problem.

\item  It is sometimes suggested that the robust control problem does not involve learning about the unknown parameter $\theta^*$, which in fact is the case, but that the adaptive Bayesian control problem involves ``learning'' about $\theta^*$. The reason for this latter claim is that in the adaptive Bayesian control approach, in accordance with the Bayesian statistics, the unknown parameter is considered to be a random variable, say $\Theta$     with prior distribution $\nu_0.$ This random variable is then considered to be an unobserved state variable, and consequently the adaptive Bayesian control problem  is regarded as a control problem with partial (incomplete) observation of the entire state vector. The typical way to solve a control problem with partial observation is by means of transforming it to the corresponding separated control problem. The separated problem is a problem with full observation, which is achieved by introducing additional state variable, and what in the Bayesian statistics is known as  the posterior distribution of $\Theta$. The ``learning'' is attributed to the use of the posterior distribution of  $\Theta$ in the separated problem. However, the information that is used in the separated problem is exactly the same that in the original problem, no learning is really involved. This is further documented by the equality \eqref{eq:robadap}.

\item We refer to Remark \ref{rem-strong-robust} for a discussion regarding distinction between the strong robust control problem \ref{eq:rob2} and the adaptive robust control problem \ref{eq:prob1R-adaptive-0} that is stated below.

\item As said before, the model uncertainty discussed in this paper occurs if  $\boldsymbol \Theta \ne \{\theta^*\}.$ The classical robust control problem \eqref{eq:rob} does not involve any reduction of uncertainty about $\theta^*,$ as the parameter space is not ``updated'' with  incoming information about the signal process $X$. Analogous remark applies to problems \eqref{eq:rob2} and \eqref{eq:free}: there is no reduction of uncertainty about the underlying stochastic dynamics involved there.

    Clearly, incorporating ``learning'' into the robust control paradigm appears like a good idea.
    In fact, in \cite{AndersonHansenSargent2003} the authors state

\begin{mdframed}[leftmargin=10pt, innerleftmargin = 30pt, hidealllines = true]
\textit{We see three important extensions to our current investigation. Like builders of rational expectations models, we have side-stepped the issue of how decision-makers select an approximating model. Following the literature on robust control, we envision this approximating model to be analytically tractable, yet to be regarded by the decision maker as not providing a correct model of the evolution of the state vector. The misspecifications we have in mind are small in a statistical sense but can otherwise be quite diverse. Just as we have not formally modelled how agents learned the approximating model, neither have we formally justified why they do not bother to learn about potentially complicated misspecifications of that model. \textbf{Incorporating forms of learning would be an important extension of our work.}\footnote{The boldface emphasis  is ours.}}
\end{mdframed}

\end{enumerate}

In the present work we follow up on the suggestion of Anderson, Hansen and Sargent stated above, and we propose a new methodology, which we call  \textbf{adaptive robust control methodology}, and which is meant to incorporate learning about $\theta^*$ into the robust control paradigm.

This methodology amounts to solving the following problem
\begin{align}\label{eq:prob1R-adaptive-0}
  \inf_{\varphi\in\cA} \sup_{\mathbb{Q}\in {\mathcal Q}^{\varphi,{\boldsymbol \Psi}}_\nu}  \bE_{\mathbb{Q}}\left( L(S,\varphi)\ \right),
\end{align}
where ${\mathcal Q}^{\varphi,{\boldsymbol \Psi}}_\nu$ is a family of probability measures on some canonical space related to the  process $X$, chosen in a way that allows for appropriate dynamic reduction of uncertainty about $\theta^*$.  Specifically, we chose the family ${\mathcal Q}^{\varphi,{\boldsymbol \Psi}}_\nu$ in terms of confidence regions for the parameter $\theta^*$ (see details in Section \ref{sec:robust}). Thus, the adaptive robust control methodology incorporates updating controller's knowledge about the parameter space -- a form of learning, directed towards reducing uncertainty about $\theta^*$ using  incoming information about the signal process $S$. Problem \eqref{eq:prob1R-adaptive-0} is derived from problem \eqref{eq:rob2}; we refer to Section \ref{sec:robust} for derivation of both problems.

In this paper we will compare the robust control methodology, the adaptive control methodology and the adaptive robust control methodology in the context of a specific optimal portfolio selection problem that is considered in finance. This will be done in Section~\ref{sec:examples}.

\section{Adaptive Robust Control Methodology}\label{sec:robust}
This section is the key section of the paper. We will first make precise the control problem that we are studying and then we will proceed to the presentation of our adaptive robust control methodology.

Let  $\{(\Omega, \sF, \bP_\theta),\ \theta\in \boldsymbol \Theta \subset \bR^d\}$ be a family of probability spaces, and let $T\in \bN$ be a fixed maturity time.  In addition, we let $A\subset \bR^k$ be a finite\footnote{$A$ will represent the set of control values, and we assume it is finite for simplicity, in order to avoid technical issues regarding existence of measurable selectors.} set and
$$
S\, :\, \bR^n\times A\times \bR^m\rightarrow \bR^n
$$
be a measurable mapping. Finally, let
$$
\ell \, :\, \bR^n \rightarrow \bR
$$ be a measurable function.

We consider an  underlying discrete time controlled dynamical system with state process $X$ taking values in $\bR^n$ and control process $\varphi$ taking values in $A$. Specifically, we let
\begin{equation}\label{eq:mm}
X_{t+1}=S(X_t,\varphi_t,Z_{t+1}),\quad t\in\cT',\quad X_0=x_0 \in \bR^n,
\end{equation}
where $Z=\set{{Z}_t,\, t\in\cT'}$ is an $\bR^m$-valued random sequence, which  is $\bF$-adapted and i.i.d. under each measure $\bP_\theta.$\footnote{The assumption that the sequence $Z$ is i.i.d. under each measure $\bP_\theta$ is made in order to simplify our study.}  The true, but unknown law of $Z$ corresponds to measure $\bP_{\theta^*}$. A control process $\varphi$ is admissible, if it is $\bF$-adapted. We denote by $\cA$ the set of all admissible controls.

Using the notation  of Section \ref{sec:ocsmu} we set $L(X,\varphi)= \ell(X_T)$, so that the problem \eqref{eq:gen} becomes now
\begin{equation}\label{eq:gen-1}
\inf_{\varphi\in \cA} \bE_{\theta^*} \ell(X_T).
\end{equation}

\subsection{Formulation of the adaptive robust control problem}\label{sec:formulation}
In what follows, we will be making use of a recursive construction of confidence regions for the unknown parameter $\theta^*$ in our model. We refer to \cite{BCC2016} for a general study of recursive constructions of confidence regions for time homogeneous Markov chains, and to Section~\ref{sec:examples} for details of a specific  recursive construction corresponding to the optimal portfolio selection problem. Here, we just postulate that the recursive algorithm for building confidence regions uses an $\bR^d$-valued and observed  process, say $(C_t,\ t\in \cT')$, satisfying the following abstract dynamics
\begin{equation}\label{eq:R}
C_{t+1}= R(t,C_t,Z_{t+1}),\quad t\in\cT',\ C_0=c_0,
\end{equation}
where $R(t,c,z)$ is a deterministic measurable function. Note that, given our assumptions about process $Z$, the process $C$ is $\bF$-adapted.
This is   one of the key features of our model. Usually $C_t$ is taken to be a consistent estimator of $\theta^*$.

Now, we fix a confidence level $\alpha\in (0,1),$ and for each time $t\in \cT'$, we assume that  an (1-$\alpha$)-confidence region, say $\mathbf{\Theta}_t$, for $\theta^*$, can be represented as
\begin{equation}\label{eq:CIR}
\mathbf{\Theta}_t=\tau(t,C_t),
\end{equation}
where, for each $t\in \cT'$,
\[
\tau(t,\cdot)\, :\, \bR^d \rightarrow 2^{\mathbf{\Theta}}
\]
is a deterministic set valued   function.\footnote{As usual, $2^{\mathbf{\Theta}}$ denotes the set of all subsets of ${\mathbf{\Theta}}$.} Note that in view of \eqref{eq:R} the construction of confidence regions given in \eqref{eq:CIR} is indeed recursive. In our construction of confidence regions, the mapping $\tau(t,\cdot)$ will be a measurable set valued function,  with compact values. The important property of the recursive confidence regions constructed in Section \ref{sec:examples} is that $\lim_{t\rightarrow \infty} \mathbf{\Theta}_t=\set{\theta^*}$, where the convergence is understood $\bP_{\theta^*}$ almost surely, and the limit is in the Hausdorff metric. This is not always the case though in general. In \cite{BCC2016} is shown that the convergence holds in probability, for the model set-up studied there. The sequence $\mathbf{\Theta}_t,\ t\in \cT'$ represents learning about $\theta^*$ based on the observation of the history $H_t,\ t\in \cT$ (cf. \eqref{hist} below). We introduce the augmented state process $Y_t=(X_t,C_t),\ t\in\cT,$  and the augmented state space
\[
E_Y=\bR^n \times \bR^d.
\]
We denote by ${\mathcal E}_Y$ the collection of Borel measurable sets in $E_Y$. The process $Y$ has the following dynamics,
\[
Y_{t+1}=\mathbf{T}(t, Y_t,\varphi_t,Z_{t+1}),\ t\in \cT',
\]
where  $\mathbf{T}$  is the mapping  $$ \mathbf{T}\, :\, \cT'\times E_Y \times A \times \bR^m \rightarrow    E_Y$$ defined as
\begin{equation}\label{eq:T}
\mathbf{T}(t,y,a,z)=\big(S(x,a,z), R(t,c,z)\big),
\end{equation}
where $ y=(x,c)\in  E_Y$.

For future reference, we define the corresponding histories
\begin{equation}\label{hist}
H_t=((X_0,C_0),(X_1,C_1),\ldots,(X_t,C_t)),\ t\in \cT,
\end{equation}
so that
\begin{equation}\label{eq:boldh}
H_t\in \mathbf{H}_t= \underbrace{E_Y \times E_Y \times \ldots
\times E_Y}_{t+1 \textrm{ times}}.
\end{equation}
Clearly, for any admissible control process $\varphi$, the random variable $H_t$ is $\sF_t$-measurable. We denote by
\begin{equation}\label{eq:h}
h_t=(y_0,y_1,\ldots,y_t)=(x_0,c_0,x_1,c_1,\ldots,x_t,c_t)
\end{equation}
 a realization of $H_t.$ Note that $h_0=y_0$.

\begin{remark}
 A control process $\varphi=(\varphi_t,\ t \in \cT')$ is called  history dependent control process if (with a slight abuse of notation)
\[
\varphi_t = \varphi_t(H_t) ,
\]
where (on the right hand side) $\varphi_t \, :\, \mathbf{H}_t \rightarrow A$, is a measurable mapping. Note that any admissible control process $\varphi$ is such that $\varphi_t$ is a function of $X_0,\ldots,X_t.$ So, any admissible control process is history dependent. On the other hand, given our above set up, any history dependent control process is ${\mathbb {F}}$--adapted, and thus, it is admissible. From now on, we identify the set $\mathcal{A}$ of admissible strategies with the set of history dependent strategies.
\end{remark}

For the future reference, for any admissible control process $\varphi$ and for any $t\in\cT'$, we denote by $\varphi^t=(\varphi_k,\ k=t,\dots, T-1)$ the ``$t$-tail'' of $\varphi$; in particular, $\varphi^0=\varphi$. Accordingly, we denote by ${\mathcal A}^t$ the collection of $t$-tails $\varphi^t$;  in particular, ${\mathcal A}^0={\mathcal A}$.

Let $\psi_t:\mathbf{H}_t\to\mathbf{\Theta}$ be a Borel measurable mapping (Knightian selector), and let us denote by $\psi =(\psi_t, \ t\in\cT')$ the sequence of such mappings, and by $\psi^t = (\psi_s, \ s=t,\ldots,T-1)$ the $t$-tail of the sequence $\psi$. The set of all sequences $\psi$, and respectively $\psi^t$, will be denoted by $\mathbf{\Psi}_K$ and $\mathbf{\Psi}_K^t$, respectively.
Similarly, we consider the measurable selectors ${\psi}_t(\cdot):\mathbf{H}_t \to\mathbf{\Theta}_t$, and correspondingly define the set of all sequences of such selectors by $\mathbf{\Psi}$, and the set of $t$-tails by $\mathbf{\Psi}^t$. Clearly, $\psi^t\in\mathbf{\Psi}^t$ if and only if $\psi^t\in\mathbf{\Psi}^t_K$ and $\psi_s(h_s)\in\tau(s,c_s), \ s=t,\ldots,T-1$.

Next, for each $(t,y,a,\theta)\in  \cT'\times  E_Y\times A\times \mathbf{\Theta}$,  we define a probability measure on $\mathcal{E}_Y$:
\begin{equation}\label{eq:QB-bar}
 Q(B\mid t,y,a,\theta)=\bP_\theta(Z_{t+1}\in \{z: \mathbf{T}(t,y,a,z)\in B\})=\bP_\theta\left(\mathbf{T}(t,y,a,Z_{t+1})\in B\right),\ B\in \mathcal{E}_Y.
\end{equation}
We assume that for each $B$ the function $Q(B\mid t,y,a,\theta)$ of $t,y,a,\theta$ is measurable. This assumption will be satisfied in the context of the optimal portfolio problem discussed in Section \ref{sec:examples}.

Finally, using Ionescu-Tulcea theorem, for every control process $\varphi \in \cA $ and for every initial probability distribution $\nu$ on $E_Y$,  we define the family ${\mathcal Q}^{\varphi,{\boldsymbol \Psi}}_\nu =\{{\mathbb Q}^{\varphi,\psi}_\nu,\  \boldsymbol\psi \in {\boldsymbol \Psi} \}$  of probability measures on the  canonical space $E_Y^{T+1}$, with ${\mathbb Q}^{\varphi,{\psi}}_\nu $ given as follows
\begin{align}\label{eq:prob}
\mathbb{Q}^{\varphi,{\psi}}_\nu(B_0,B_1,\ldots,B_T)
 =\int\limits_{B_0}\int\limits_{B_1}\cdots \int\limits_{B_T} \prod_{t=1}^{T} Q( dy_t |  t-1,y_{t-1},\varphi_{t-1}(h_{t-1}),{\psi}_{t-1}(h_{t-1})) \nu(dy_0)
\end{align}
Analogously we define the set ${\mathcal Q}^{\varphi,{\boldsymbol \Psi}_K}_\nu =\{{\mathbb Q}^{\varphi,{\psi}^K}_\nu,\ \psi^K\in {\boldsymbol \Psi}_K \}$.

The \textit{strong robust hedging} problem is then given as:
\begin{align}\label{prob1R-strong}
  \inf_{\varphi\in\cA} \sup_{\mathbb{Q}\in {\mathcal Q}^{\varphi,{\boldsymbol \Psi}_K}_\nu} \bE_{\mathbb{Q}} \ell(X_T).
\end{align}
The corresponding   \textit{adaptive robust hedging} problem is:
\begin{align}\label{prob1R-adaptive}
  \inf_{\varphi\in\cA} \sup_{\mathbb{Q}\in {\mathcal Q}^{\varphi,{\boldsymbol \Psi}}_\nu} \bE_{\mathbb{Q}} \ell(X_T).
\end{align}

\begin{remark}\label{rem-strong-robust}
The strong robust hedging problem is essentially a game problem between the hedger and his/her Knightian adversary -- the nature, who may keep changing the dynamics of the underlying stochastic system over time.  In this game, the nature is not restricted in its choices of model dynamics, except for the requirement that ${\psi}^K_t(H_t) \in \mathbf{\Theta}$, and each choice is potentially based on the entire history $H_t$ up to time $t$. On the other hand, the adaptive robust hedging problem is a game problem between the hedger and his/her Knightian adversary -- the nature, who, as in the case of strong robust hedging problem, may keep changing the dynamics of the underlying stochastic system over time. However, in this game, the nature is restricted in its choices of model dynamics to the effect that ${\psi}_t(H_t)\in \tau(t,C_t)$.
\end{remark}

Note that if the parameter $\theta^*$ is known, then, using the above notations and the canonical construction\footnote{Which, clearly, is not needed in this case.}, the hedging problem reduces to
\begin{align}\label{eq:prob1R-true}
  \inf_{\varphi\in\cA} \bE_{\mathbb{Q}^*}\ell(X_T),
\end{align}
where, formally,  the probability $\mathbb{Q}^*$ is given as in \eqref{eq:prob} with $\tau(t,c)=\{\theta^*\}$ for all $t$ and $c$. It certainly holds that
\begin{align}\label{comp1}
   \inf_{\varphi\in\cA} \bE_{\mathbb{Q}^*}\ell(X_T)  \leq \inf_{\varphi\in\cA} \sup_{\mathbb{Q}\in {\mathcal Q}^{\varphi,{\boldsymbol \Psi}}_\nu} \bE_{\mathbb{Q}}\ell(X_T)  \leq \inf_{\varphi\in\cA} \sup_{\mathbb{Q}\in {\mathcal Q}^{\varphi,{\boldsymbol \Psi}_K}_\nu} \bE_{\mathbb{Q}}\ell(X_T).
\end{align}
It also holds that
\begin{align}\label{eq:comp11}
\inf_{\varphi\in\cA}\sup_{\theta \in {\mathbf \Theta}} \bE_\theta \ell(X_T) \leq \inf_{\varphi\in\cA} \sup_{\mathbb{Q}\in {\mathcal Q}^{\varphi,{\boldsymbol \Psi}_K}_\nu} \bE_{\mathbb{Q}}\ell(X_T).
\end{align}

\begin{remark}
We conjecture that
$$
\inf_{\varphi\in\cA} \sup_{\mathbb{Q}\in {\mathcal Q}^{\varphi,{\boldsymbol \Psi}}_\nu} \bE^{\mathbb{Q}}\ell(X_T) \leq \inf_{\varphi\in\cA}\sup_{\theta \in {\mathbf \Theta}} \bE_\theta \ell(X_T).
$$
However, at this time, we do not know how to prove this conjecture, and whether it is true in general.
\end{remark}

\subsection{Solution of the adaptive robust control problem}\label{sec:SolutionAdapRobust}

In accordance with our original set-up,  in what follows we assume that $\nu(dx_0)=\delta_{h_0}(dx_0)$ (Dirac measure), and we use the notation ${\mathbb Q}^{\varphi,{\psi}}_{h_0}$ and ${\mathcal Q}^{\varphi,{\boldsymbol \Psi}}_{h_0}$ in place of ${\mathbb Q}^{\varphi,{\psi}}_{\nu}$ and ${\mathcal Q}^{\varphi,{\boldsymbol \Psi}}_\nu$, so that the problem \eqref{prob1R-adaptive} becomes
\begin{align}\label{eq:prob1R-adaptive-h0}
  \inf_{\varphi\in\cA} \sup_{\mathbb{Q}\in {\mathcal Q}^{\varphi,{\boldsymbol \Psi}}_{h_0}} \bE_{\mathbb{Q}}\ell(X_T).
\end{align}
For each $t\in \cT'$,  we then define a probability measure on the concatenated canonical space as follows
\begin{align*}
\mathbb{Q}^{\varphi^t, {\psi}^t}_{h_t}(B_{t+1},\ldots,B_T) =
\int\limits_{B_{t+1}}\cdots \int\limits_{B_T}
\prod\limits_{u=t+1}^{T} Q( dy_u\mid u-1,y_{u-1},\varphi_{u-1}(h_{u-1}),{\psi}_{u-1}(h_{u-1})).
\end{align*}
Accordingly, we put ${\mathcal Q}^{\varphi^t, {\boldsymbol \Psi}^t}_{h_t} = \{{\mathbb Q}^{\varphi^{t},{\psi}^t}_{h_{t}},\ \psi^{t}\in {\boldsymbol \Psi}^t \}$. Finally, we define the functions $U_t$ and $U^*_t$ as follows:
for $\varphi^t\in {\mathcal A}^t$ and $h_t  \in H_t$
\begin{align}\label{eq:prob1R-adaptive-Dirac}
   U_t(\varphi^t,h_t)&=\sup_{\mathbb{Q}\in {\mathcal Q}^{\varphi^t, {\boldsymbol \Psi}^t}_{h_t}} \bE_{\mathbb{Q}}
    \ell(X_T),\ t\in \cT',\\
    U^*_t(h_t)&=\inf_{\varphi^t\in {\mathcal A}^t} U_t(\varphi^t,h_t),\ t\in \cT',\\
    U^*_T(h_T) & = \ell(x_T).
\end{align}
Note in particular that
\[
U^*_0(y_0)=U^*_0(h_0)=\inf_{\varphi\in\cA} \sup_{\mathbb{Q}\in {\mathcal Q}^{\varphi,{\boldsymbol \Psi}}_{h_0}} \bE_\bQ \ell(X_T).
\]
We call $U^*_t$ the \textit{adaptive robust Bellman} functions.

\subsubsection{Adaptive robust Bellman equation}
Here we will show that a solution to the optimal problem \eqref{eq:prob1R-adaptive-h0} can be given in terms of the adaptive robust Bellman equation associated to it.

Towards this end we will need to solve for functions $W_t, \ t\in \cT,$ the following adaptive robust Bellman equations (recall that $y=(x,c)$)
\begin{align}
  W_T(y) & = \ell(x), \quad y\in E_Y,
    \nonumber \\
W_t(y) & = \inf_{a\in A} \sup_{ \theta \in \tau(t,c)}
\int_{E_Y} W_{t+1}(y')
   Q( dy'\mid t,y,a,\theta ) , \quad
     y\in E_Y, \   t=T-1, \ldots, 0, \label{eq:bellmanEquationrobustIII}
\end{align}
and to compute the related optimal selectors $\varphi^*_t, \ t\in \cT'$.

In Lemma~\ref{lemma:WTusc} below, under some additional technical assumptions,  we will show that the optimal selectors in \eqref{eq:bellmanEquationrobustIII} exist; namely, for any $t\in \cT'$,  and any $y=(x,c)\in E_Y$, there exists a measurable mapping $\varphi^*_t\, : \, E_Y \rightarrow A,$ such that
\[
 W_t(y)  =  \sup_{ \theta \in \tau(t,c)}\int_{E_Y} W_{t+1}(y')
   Q( dy'\mid t,y,\varphi^*_t(y),\theta).
\]
In order to proceed, to simplify the argument, we will assume that under measure $\bP_\theta$, for each $t\in \cT$, the random variable $Z_t$ has a density   with respect to the Lebesgue measure, say $f_Z(z;\theta), \ z\in\bR^m$. We will also assume that the set $A$ of available actions is finite. In this case, the problem  \eqref{eq:bellmanEquationrobustIII} becomes
\begin{align}
  W_T(y) & = \ell(x), \quad y\in E_Y, \nonumber \\
W_t(y) & = \min_{a\in A} \sup_{ \theta \in \tau(t,c)}
\int_{\bR^m} W_{t+1}(\mathbf{T}(t,y, a, z))f_Z(z; \theta)dz ,
   \quad y\in E_Y, \   t=T-1, \ldots, 0.
    \label{eq:BellmanEqRobust4}
\end{align}
where $T(t,y, a, z)$ is given in \eqref{eq:T}.
Additionally, we take the standing assumptions that
\begin{enumerate}[(i)]
\item for any $a$ and $z$, the function $S(\cdot,a,z)$ is
continuous.
  \item For each $z$, the function $f_Z(z;\cdot)$ is continuous in $\theta$.
  \item $\ell$ is continuous and bounded.
  \item For each $t\in \cT'$, the function $R(t,\cdot,\cdot)$ is continuous.
\end{enumerate}
Then, we have the following result.

\begin{lemma}\label{lemma:WTusc} The functions $W_t,\ t=T,T-1,\ldots,0,$ are upper semi-continuous (u.s.c.), and the optimal selectors $\varphi^*_t ,\ t=T,T-1,\ldots,0,$ in \eqref{eq:bellmanEquationrobustIII} exist.
\end{lemma}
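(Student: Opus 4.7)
The plan is to establish both claims by backward induction on $t$, starting from $t = T$ where $W_T(y) = \ell(x)$ is continuous (hence u.s.c.) and bounded by assumption (iii). For the inductive step I assume that $W_{t+1}$ is u.s.c. and bounded; boundedness of $W_t$ by the same constant $\sup|\ell|$ is then immediate since $f_Z(\cdot;\theta)$ integrates to one, so the real work is in propagating upper semi-continuity through one step of the recursion.

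The first step is to show that, for each fixed $a \in A$, the inner functional
\[
g_t(y, a, \theta) \;=\; \int_{\bR^m} W_{t+1}\bigl(\mathbf{T}(t, y, a, z)\bigr)\, f_Z(z;\theta)\, dz
\]
is jointly u.s.c. in $(y, \theta)$. Given $(y_n, \theta_n) \to (y, \theta)$, I would split
\[
g_t(y_n, a, \theta_n) \;=\; \int W_{t+1}(\mathbf{T}(t, y_n, a, z))\bigl[f_Z(z;\theta_n) - f_Z(z;\theta)\bigr] dz \;+\; \int W_{t+1}(\mathbf{T}(t, y_n, a, z))\, f_Z(z;\theta)\, dz.
\]
The first summand vanishes in the limit: $W_{t+1}$ is uniformly bounded and, by (ii) together with the density property of $f_Z$, Scheff\'e's lemma gives $\int |f_Z(z;\theta_n) - f_Z(z;\theta)|\, dz \to 0$. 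For the second summand, the continuity assumptions (i) and (iv) yield $\mathbf{T}(t, y_n, a, z) \to \mathbf{T}(t, y, a, z)$ pointwise in $z$, upper semi-continuity of $W_{t+1}$ gives $\limsup_n W_{t+1}(\mathbf{T}(t, y_n, a, z)) \leq W_{t+1}(\mathbf{T}(t, y, a, z))$ pointwise, and the reverse Fatou lemma (applicable because of the uniform bound) delivers $\limsup_n \int W_{t+1}(\mathbf{T}(t, y_n, a, z)) f_Z(z;\theta)\, dz \leq g_t(y, a, \theta)$.

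Next I would pass from $g_t$ to $h_t(y, a) = \sup_{\theta \in \tau(t, c)} g_t(y, a, \theta)$. This is the step where the \emph{additional technical assumptions} mentioned in the lemma statement become essential: I would assume that the correspondence $c \mapsto \tau(t, c)$ is upper hemicontinuous with non-empty compact values. Berge's maximum theorem in its upper-semi-continuous form then yields that $h_t(\cdot, a)$ is u.s.c. and that the supremum is attained. Since $A$ is finite, $W_t(y) = \min_{a \in A} h_t(y, a)$ is u.s.c. as the minimum of finitely many u.s.c. functions, which closes the induction on $W_t$. Finiteness of $A$ also makes the outer minimization trivial: the minimum is attained pointwise, and a Borel measurable selector $\varphi^*_t$ is obtained by endowing $A$ with any fixed total order and picking, at each $y$, the smallest minimizer --- the sets $\{y : h_t(y, a) \leq h_t(y, a')\}$ being Borel thanks to u.s.c.

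The principal obstacle is the second step: passing from joint u.s.c. of $g_t$ to u.s.c. of $h_t$ requires upper hemicontinuity of the confidence-region correspondence $\tau(t, \cdot)$, which does not follow from the mere measurability and compact-valuedness postulated in the abstract setup. The recursive construction of $\tau$ in Section~\ref{sec:examples}, built via continuous functions of the observable estimator process $C$, should deliver this regularity, but it must be verified for the concrete model rather than inferred from the abstract framework.
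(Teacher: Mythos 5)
Your argument is correct and follows the same broad scheme as the paper's proof --- backward induction, upper semi-continuity of the parametric integral, a maximum-theorem step for the supremum over the compact confidence region, and a trivial minimization plus selector over the finite set $A$ --- but the technical tools differ in an instructive way. The paper shows that $w_{T-1}(y,a,\theta)=\int W_T(\mathbf{T}(T-1,y,a,z))f_Z(z;\theta)\,dz$ is \emph{continuous} (the integrand is continuous and bounded), then applies \cite[Proposition 7.33]{BertsekasShreve1978Book} to $-w_{T-1}$ with $\mathrm{D}=\bigcup_{(y,a)}\{(y,a)\}\times\tau(T-1,c)$ to get lower semi-continuity of the infimum over $\theta\in\tau(T-1,c)$ together with attainment, and then states that the remaining steps are analogous; you instead argue the joint upper semi-continuity of $g_t(y,a,\theta)$ directly via Scheff\'e's lemma (for the $\theta$-dependence of the density) and the reverse Fatou lemma (for the merely u.s.c.\ integrand), and pass to the value function with the u.s.c.\ half of Berge's maximum theorem. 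Your route buys two things: it genuinely covers the inductive step, where $W_{t+1}$ is only u.s.c.\ and the paper's continuity argument no longer applies verbatim (your Fatou-type estimate is precisely what ``the rest of the proof follows in the analogous way'' requires), and it makes explicit the regularity needed from the confidence-region correspondence --- upper hemicontinuity of $\tau(t,\cdot)$ with nonempty compact values --- which in the paper is implicit in the hypothesis of Proposition 7.33 that the set $\mathrm{D}$ be closed, i.e.\ that $\tau(t,\cdot)$ have closed graph. What the citation of \cite{BertsekasShreve1978Book} buys the paper is a packaged measurable-selection statement, but since $A$ is finite the selector for the outer minimization is elementary in either treatment, exactly as you handle it by ordering $A$. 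So the ``obstacle'' you flag at the end is not a gap in your proof relative to the paper: it is the same unstated assumption the paper's argument also relies on, and it is indeed satisfied by the explicit interval and ellipsoid constructions of $\tau$ in Section~\ref{sec:examples}, whose defining functions are continuous in $c$.
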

\begin{proof}
The function $W_T$ is continuous.
Since $\mathbf{T}(T-1,\cdot,\cdot,z)$ is continuous, then, $W_T(\mathbf{T}(T-1,\cdot,\cdot,z))$ is continuous.
Consequently, 
the function
\[
w_{T-1}(y,a,\theta)=\int_{\bR} W_{T}(\textbf{T}(T-1,y, a, z)) f_Z(z; \theta)dz
\]
is continuous, and thus u.s.c.

Next we will apply \cite[Proposition 7.33]{BertsekasShreve1978Book} by taking (in the notations of \cite{BertsekasShreve1978Book})
\begin{align*}
\mathrm{X} & = E_Y\times A = \bR^n\times\bR^d\times A,  \quad \mathrm{x}=(y,a), \\
\mathrm{Y} & =  \boldsymbol{\Theta}, \quad  \mathrm{y}=\theta, \\
\mathrm{D} & =\bigcup_{(y,a)\in E_Y \times A} \set{(y,a)} \times  \tau (T-1,c), \\
f(\mathrm{x},\mathrm{y}) & = - w_{T-1}(y,a,\theta).
\end{align*}
Recall that in view of the prior assumptions, $\mathrm{Y}$ is compact.  Clearly $\mathrm{X}$ is metrizable.
From the above, $f$ is lower semi-continuous (l.s.c). Also note that the cross section $\mathrm{D}_\mathrm{x} = {\mathrm D}_{(y,a)} = \set{\theta \in \boldsymbol{\Theta}\, :\, (y,a,\theta) \in \mathrm{D}}$ is given by $\mathrm{D}_{(y,a)}(t)=\tau (t,c)$.
Hence, by \cite[Proposition 7.33]{BertsekasShreve1978Book}, the function
\[
\widetilde w_{T-1}(y,a)= \inf_{\theta \in \tau(T-1,c)} (-w_{T-1}(y,a,\theta)) ,\quad (y,a)\in E_Y \times A
\]
is l.s.c.. Consequently, the function $W_{T-1}= {\inf _{a\in A}}(-\widetilde w_{T-1}(y,a))$ is u.s.c., and there exists an optimal selector $\varphi^*_{T-1}$.
The rest of the proof follows in the analogous way.
\end{proof}

The following proposition is the key result in this section.
\begin{proposition}\label{prop:main} For any $h_t\in H_t$, and $t\in\cT$, we have
\begin{equation}\label{opt}
U^*_t(h_t)=W_t(y_t).
\end{equation}
Moreover, the policy $\varphi^*$ constructed from the selectors in
\eqref{eq:bellmanEquationrobustIII} is robust-optimal, that is
\begin{equation}\label{opt1}
U^*_t(h_t)=U_t(\varphi_t ^*,h_t), \quad t\in \cT'.
\end{equation}
\end{proposition}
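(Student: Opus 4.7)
The plan is to prove the identity $U^*_t(h_t)=W_t(y_t)$ and the optimality of $\varphi^*$ simultaneously by backward induction on $t$, starting at $t=T$ and working down. The base case $t=T$ is immediate since $U^*_T(h_T)=\ell(x_T)=W_T(y_T)$. For the inductive step, assuming $U^*_{t+1}(h_{t+1})=W_{t+1}(y_{t+1})$ and that $\varphi^{*,t+1}$ attains $U_{t+1}(\varphi^{*,t+1},h_{t+1})=W_{t+1}(y_{t+1})$ for every history $h_{t+1}$, I would establish the one-step recursion
\begin{equation*}
U_t(\varphi^t,h_t)=\sup_{\theta\in\tau(t,c_t)}\int_{E_Y} U_{t+1}(\varphi^{t+1},(h_t,y_{t+1}))\,Q(dy_{t+1}\mid t,y_t,\varphi_t(h_t),\theta),
\end{equation*}
for any admissible $\varphi^t$.

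This one-step decomposition is the heart of the argument. By the construction of $\mathbb{Q}^{\varphi^t,\psi^t}_{h_t}$ through the Ionescu–Tulcea product, the expectation factors as an outer $Q(dy_{t+1}\mid t,y_t,\varphi_t(h_t),\psi_t(h_t))$-integral of the tail expectation $\mathbb{E}_{\mathbb{Q}^{\varphi^{t+1},\psi^{t+1}}_{(h_t,y_{t+1})}}\ell(X_T)$. The sup over $\psi^t=(\psi_t,\psi^{t+1})$ splits in two layers: $\psi_t$ evaluated at the fixed history $h_t$ is just a variable in $\tau(t,c_t)$, while $\psi^{t+1}$ can be chosen as a function of $(h_t,y_{t+1})$. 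A measurable-selection argument (in the spirit of \cite[Proposition 7.33 and 7.50]{BertsekasShreve1978Book}) permits pushing the sup over $\psi^{t+1}$ inside the integral against $Q(dy_{t+1}\mid\cdot)$, which then produces $U_{t+1}(\varphi^{t+1},(h_t,y_{t+1}))$ under the integral sign and the remaining sup over $\theta\in\tau(t,c_t)$ outside.

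Given the one-step recursion, the two inequalities fall out easily. For $U^*_t(h_t)\ge W_t(y_t)$: since $U_{t+1}(\varphi^{t+1},h_{t+1})\ge U^*_{t+1}(h_{t+1})=W_{t+1}(y_{t+1})$ by induction, the right-hand side of the recursion dominates $\sup_\theta\int W_{t+1}(y')Q(dy'\mid t,y_t,a,\theta)$ with $a=\varphi_t(h_t)$; taking the infimum over $\varphi^t\in\mathcal{A}^t$ gives $W_t(y_t)$ from \eqref{eq:bellmanEquationrobustIII}. For $U^*_t(h_t)\le W_t(y_t)$: plug in the Markovian policy $\varphi^{*,t}$ built from the selectors of Lemma~\ref{lemma:WTusc}; the inductive equality $U_{t+1}(\varphi^{*,t+1},h_{t+1})=W_{t+1}(y_{t+1})$ turns the right-hand side of the recursion into the defining sup–integral for $W_t(y_t)$, yielding $U_t(\varphi^{*,t},h_t)=W_t(y_t)\ge U^*_t(h_t)$. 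Combining shows both $U^*_t(h_t)=W_t(y_t)$ and $U_t(\varphi^{*,t},h_t)=U^*_t(h_t)$, completing the induction.

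The principal obstacle is the measurable selection step used to justify the decomposition of the supremum over $\psi^t$ into an outer sup over $\theta\in\tau(t,c_t)$ and an inner sup that can be absorbed into the integrand as $U_{t+1}(\varphi^{t+1},\cdot)$. This requires that the tail value function $(y_{t+1},\psi^{t+1})\mapsto\mathbb{E}_{\mathbb{Q}^{\varphi^{t+1},\psi^{t+1}}_{(h_t,y_{t+1})}}\ell(X_T)$ be jointly measurable in its arguments and that $\tau(t,\cdot)$ be a compact-valued measurable correspondence, both of which are in place under the standing assumptions and the upper semicontinuity established in Lemma~\ref{lemma:WTusc}. Once this is handled, the rest of the proof is a routine dynamic-programming verification.
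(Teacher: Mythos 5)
Your proof is correct and follows essentially the same route as the paper: backward induction on $t$ built on the same one-step decomposition of $\sup_{\mathbb{Q}\in\mathcal{Q}^{\varphi^t,\boldsymbol\Psi^t}_{h_t}}\bE_{\mathbb{Q}}\ell(X_T)$ into an outer supremum over $\theta\in\tau(t,c_t)$ and an inner supremum absorbed as $U_{t+1}(\varphi^{t+1},(h_t,y))$ under the $Q(\,\cdot\mid t,y_t,\varphi_t(h_t),\theta)$-integral, which is exactly the (lightly justified, Iyengar-style) pivot of the paper's argument, and you rightly flag the measurable-selection issue behind it. The only real difference is in closing the upper bound: the paper uses an $\epsilon$-optimal continuation strategy $\varphi^{t+1,\epsilon}$ and lets $\epsilon\to 0$, whereas you plug in the optimal selectors from Lemma~\ref{lemma:WTusc} inside a strengthened induction hypothesis, a minor variation that avoids invoking uniformly $\epsilon$-optimal tails and yields \eqref{opt1} simultaneously rather than as an afterthought.
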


\begin{proof}
We proceed similarly as in the proof of \cite[Theorem 2.1]{Iyengar2005}, and via backward induction in $t=T,T-1,\ldots,1,0$.

Take $t=T$. Clearly, $U^*_T(h_T)=W_T(y_T)$. For $t=T-1$ we have
\begin{align*}
U^*_{T-1}(h_{T-1})& = \inf_{\varphi^{T-1}=\varphi_{T-1}\in {\cA}^{T-1}}\sup_{\mathbb{Q}\in {\mathcal Q}^{\varphi^{T-1}, {\boldsymbol \Psi}^{T-1}}_{h_{T-1}}}
\bE^{\mathbb{Q}}\ell(X_T)\\
 & =\inf_{\varphi^{T-1}=\varphi_{T-1}\in {\mathcal A}^{T-1}}\sup_{ \theta \in \tau(T-1,c_{T-1})}
 \int_{E_Y}  U^*_{T}(h_{T-1},y) Q( dy\mid T-1,y_{T-1},\varphi_{T-1}(h_{T-1}),\theta )\\
 & = \inf_{\varphi^{T-1}=\varphi_{T-1}\in {\mathcal A}^{T-1}}\sup_{ \theta \in \tau(T-1,c_{T-1})} \int_{E_Y} W_{T}(y) \,Q( dy\mid T-1,y_{T-1},\varphi_{T-1}(h_{T-1}),\theta )\\
 &= \inf_{a\in {A}}\sup_{ \theta \in \tau(T-1,c_{T-1})}\int_{E_Y} W_{T}(y)  \, Q( dy\mid T-1,y_{T-1},a,\theta )=W_{T-1}(y_{T-1}).
\end{align*}

For $t=T-1,\ldots,1,0$ we have by induction
\begin{eqnarray*} U^*_t(h_t)&=&\inf_{\varphi^t\in {\mathcal
A}^t}\sup_{\mathbb{Q}\in {\mathcal Q}^{\varphi^t, {\boldsymbol \Psi}^t}_{h_t}} \bE^{\mathbb{Q}}\ell (X_T)\\
&=&\inf_{\varphi^t=(\varphi_t,\varphi^{t+1})\in {\mathcal A}^t}\sup_{ \theta \in \tau(t,c_t)}
 \int_{E_Y}  \sup_{\widehat {\mathbb{Q}}\in {\mathcal Q}^{\varphi^{t+1}, {\boldsymbol \Psi}^{t+1}}_{h_t,y}}
 \bE^{\widehat {\mathbb{Q}}}\ell (X_T)
Q( dy\mid t,y_t,\varphi_t(h_t),\theta )\\&\geq &\inf_{\varphi^t=(\varphi_t,\varphi^{t+1})\in {\mathcal A}^t}\sup_{ \theta \in \tau(c_t,t)} \int_{E_Y}
U^*_{t+1}(h_t,y) \,Q( dy\mid t,y_t,\varphi_t(h_t),\theta )\\&=
&\inf_{a\in {A}}\sup_{ \theta \in \tau(t,c_t)}\int_{E_Y}
U^*_{t+1}(h_t,y)
  \, Q( dy\mid t,y_t,a,\theta )\\&=
&\inf_{a\in {A}}\sup_{ \theta \in \tau(t,c_t)}\int_{E_Y}
W_{t+1}(y)
  \, Q( dy\mid t,y_t,a,\theta )=W_{t}(y_t).
\end{eqnarray*}
Now, fix $\epsilon >0$, and let $\varphi^{t+1,\epsilon}$ denote an
$\epsilon$-optimal control process starting at time $t+1$, so that
\[
U_{t+1}(\varphi^{t+1,\epsilon},h_{t+1})\leq U^*_{t+1}(h_{t+1})+\epsilon.
\]
Then we have
\begin{align*}
U^*_t(h_t)& =\inf_{\varphi^t\in {\mathcal A}^t}\sup_{\mathbb{Q}\in {\mathcal Q}^{\varphi^t, {\boldsymbol \Psi}^t}_{h_t}}
    \bE^{\mathbb{Q}}\ell (X_T)\\
 & =\inf_{\varphi^t=(\varphi_t,\varphi^{t+1})\in {\mathcal A}^t}\sup_{ \theta \in \tau(t,c_t)}\int_{E_Y}
    \sup_{\widehat {\mathbb{Q}}\in {\mathcal Q}^{\varphi^{t+1}, {\boldsymbol \Psi}^{t+1}}_{h_t,y}}\bE^{\widehat {\mathbb{Q}}}\ell (X_T)
    Q( dy\mid t,y_t,\varphi_t(h_t),\theta )\\
&\leq \inf_{\varphi^t=(\varphi_t,\varphi^{t+1})\in {\mathcal A}^t}\sup_{ \theta \in \tau(t,c_t)} \int_{E_Y}
    \sup_{\widehat {\mathbb{Q}}\in {\mathcal Q}^{\varphi^{t+1,\epsilon}, {\boldsymbol \Psi}^{t+1}}_{h_t,y}}
    \bE^{\widehat {\mathbb{Q}}}\ell (X_T)Q( dy\mid t,y_t,\varphi_t(h_t),\theta )\\
& \leq  \inf_{a\in {A}}\sup_{ \theta \in \tau(t,c_t)}\int_{E_Y}  U^*_{t+1}(h_t,y)\,
    Q(dy\mid t,y_t,a;\theta )+\epsilon  \\
&=  \inf_{a\in {A}}\sup_{ \theta \in \tau(t,c_t)}\int_{E_Y}  W_{t+1}(y)\,
    Q(dy\mid t,y_t,a;\theta )+\epsilon=W_{t}(y_t)+\epsilon.
\end{align*}
Since $\epsilon$ was arbitrary, the proof of \eqref{opt} is done.
Equality \eqref{opt1} now follows easily.
\end{proof}

\section{Example: Dynamic Optimal Portfolio Selection}\label{sec:examples}
In this section we will present an example that illustrates the adaptive robust control methodology.

We follow here the set up of \cite{BrandtEtAl2005} in the formulation of our dynamic optimal portfolio selection.
As such, we consider the classical dynamic optimal asset allocation problem, or dynamic optimal portfolio selection, when an investor is deciding at time $t$ on investing in a risky asset and a risk-free banking account by maximizing the expected utility $u(V_T)$ of the terminal wealth, with $u$ being a given utility function. The underlying market model is subject to the type of uncertainty that has been introduced in Section~\ref{sec:ocsmu}.

Denote by $r$ the constant risk-free interest rate and by $Z_{t+1}$ the excess return on the risky asset from time $t$ to $t+1$. We assume that the process $Z$ is observed. The dynamics of the wealth process produced by a self-financing trading strategy is given by
\begin{equation}\label{eq:wealthEx1}
  V_{t+1} = V_t(1+r + \varphi_t Z_{t+1}),\quad t\in {\cal T}',
\end{equation}
with the initial wealth $V_0=v_0$, and where $\varphi_t$ denotes the proportion of the portfolio wealth invested in the risky asset from time $t$ to $t+1$. We assume that the process $\varphi$ takes finitely many values, say $a_i,\ i=1,\ldots,N$ where $a_i\in [0,1].$ Using the notations from Section~\ref{sec:robust}, here we have that $X_t=V_t$, and setting $x=v$ we get
$$
S(v,a,z)=v(1+r+az), \quad \ell(v)=-u(v), \quad A=\set{a_i,\ i=1,\ldots,N}.
$$
We further assume that the excess return process $Z_{t}$ is an  i.i.d. sequence of Gaussian random variables with mean $\mu$ and variance $\sigma^2$. Namely, we put
$$
Z_{t} = \mu +\sigma \varepsilon_{t},
$$
where $\varepsilon_t, \ t\in \cT'$ are i.i.d. standard Gaussian random variables.
The model uncertainty comes from the unknown parameters $\mu$ and/or $\sigma$.
We will discuss two cases: Case 1 - unknown mean $\mu$ and known standard deviation $\sigma$, and Case II - both $\mu$ and $\sigma$ are unknown.

\smallskip\noindent
\textit{Case I.} Assume that $\sigma$ is known, and the model uncertainty comes only from the unknown parameter $\mu^*$.
Thus, using the notations from Section~\ref{sec:robust}, we have that $\theta^* = \mu^*,$ $\theta = \mu,$ and we take $C_t=\widehat \mu_t$, $\boldsymbol\Theta =[\underline{\mu},\overline{\mu}]\subset\bR$, where $\widehat{\mu}$ is  an estimator of $\mu$, given the observations $Z$, that takes values in $\boldsymbol{\Theta}$. For the detailed discussion on the construction of such estimators we refer to \cite{BCC2016}.
For this example, it is enough to take $\widehat{\mu}$ the Maximum Likelihood Estimator (MLE), which is the sample mean in this case, projected appropriately on $\boldsymbol\Theta$. Formally, the recursion construction of $\widehat{\mu}$ is defined as follows:
\begin{equation}\label{eq:esti_proj}
\begin{aligned}
    \widetilde{\mu}_{t+1}&=\frac{t}{t+1}\widehat{\mu}_{t}+{\frac{1}{t+1}} Z_{t+1},\\
    \widehat{\mu}_{t+1}&=\mathbf{P}(\widetilde{\mu}_{t+1}), \qquad t\in\cT',
\end{aligned}
\end{equation}
with $\widehat{\mu}_0=c_0$, and where $\mathbf{P}$ is the projection to the closest point in $\boldsymbol\Theta$, i.e.
$\mathbf P(\mu) = \mu$ if $\mu\in[\underline{\mu},\overline{\mu}]$, $\mathbf P(\mu) = \underline{\mu}$ if $\mu<\underline{\mu}$, and $\mathbf P(\mu) = \overline{\mu}$ if $\mu> \overline{\mu}$. We take as the initial guess $c_0$ any point in $\boldsymbol\Theta$. It is immediate to verify that
\[
 R(t, c,z)=\mathbf{P}\left(\frac{t}{t+1} c +\frac{1}{t+1} z\right)
\]
is continuous in $c$ and $z$. Putting the above together we get that the function $\mathbf{T}$ corresponding to \eqref{eq:T} is given by
$$
\mathbf{T}(t,v,c,a,z) = \left(v(1+r+az), \frac{t}{t+1}c+\frac{1}{t+1}z\right).
$$
Now, we note that the $(1-\alpha)$-confidence region  for $\mu^*$ at time $t$ is given as\footnote{We take this interval to be closed, as we want it to be compact.}
\[
 \mathbf{\Theta}_t=\tau(t,\widehat \mu_t),
\]
 where
 \[
 \tau(t,c)=\left [c-\frac{\sigma}{\sqrt t}q_{\alpha/2}, c +\frac{\sigma}{\sqrt t}q_{\alpha/2}\right ],
 \]
and  where  $q_\alpha$ denotes the $\alpha$-quantile  of a standard normal distribution. With these at hand we define the kernel $Q$ according to \eqref{eq:QB-bar}, and the set of probability measures $\cQ^{\varphi,\boldsymbol\Psi}_{h_0}$ on canonical space by \eqref{eq:prob}.

Formally, the investor's problem\footnote{The original investor's problem is   $\sup_{\varphi \in\cA} \inf_{\bQ\in\cQ_{h_0}^{\varphi,\boldsymbol\Psi}} \bE_\bQ[u(V_T)]$, where $\cA$ is the set of self-financing trading strategies}, in our notations and setup, is formulated as follows
\begin{equation}\label{eq:ex1Problem}
  \inf_{\varphi \in\cA} \sup_{\bQ\in\cQ_{h_0}^{\varphi,\boldsymbol\Psi}} \bE_\bQ[-u(V_T)]
\end{equation}
where $\cA$ is the set of self-financing trading strategies.

The corresponding  adaptive robust Bellman equation becomes
\begin{equation}\label{eq:BellmanEx1}
\begin{cases}
  W_T(v, c) = -u(v), \\
  W_t(v, c) = \inf_{a\in A} \sup_{\mu\in\tau_\alpha(t, c)}
  \bE\left[ W_{t+1}\left(T(t,v, c ,a, \mu + \sigma \varepsilon_{t+1})\right)\right],
\end{cases}
\end{equation}
where the expectation $\bE$ is with respect to a standard one dimensional Gaussian distribution. In view of  Proposition \ref{prop:main}, the function $W_t(v,c)$ satisfies
$$
W_t(v, c) = \inf_{\varphi^t\in {\mathcal A}^t}\sup_{\mathbb{Q}\in {\mathcal Q}^{\varphi^t, {\boldsymbol \Psi}^t}_{h'_{t}} }
\bE_{\mathbb{Q}}[-u(V_T)],
$$
where $h'_t=(h_{t-1}, v, c)$ for any $h_{t-1}=(v_0, c_0,\ldots,v_{t-1}, c_{t-1})\in \mathbf{H}_{t-1}$.
So, in particular,
$$
W_0(v_0, c_0)=\inf_{\varphi \in\cA} \sup_{\bQ\in\cQ_{h_0}^{\varphi,\boldsymbol\Psi}} \bE_\bQ[-u(V_T)],
$$
and the optimal selector $\varphi^*_t,t\in\cT$, from \eqref{eq:BellmanEx1} solves the original investor's allocation problem.

To further reduce the computational complexity, we consider a CRRA utility of the form  $u(x) = \frac{x^{1-\gamma}}{1-\gamma}$, for $x\in \bR$,  and some $\gamma\neq1$. In this case, we have
$$
u(V_T) = u(V_t)\left[\prod_{s=t}^{T-1} (1+r+\varphi_s Z_{s+1})\right]^{1-\gamma}, \quad t\in \cT'.
$$
Note that
$$
W_t(v,c) = -u(v)\cdot
\begin{cases}
\sup_{\varphi^t\in\cA^t} \inf_{\mathbb{Q}\in {\mathcal Q}^{\varphi^t, {\boldsymbol \Psi}^t}_{h'_{t}} } \bE_{\mathbb{Q}}\left[ \left(\prod_{s=t}^{T-1}(1+r+\varphi_s Z_{s+1})\right)^{1-\gamma}\right],  & \gamma <1, \\
\inf_{\varphi^t\in\cA^t} \sup_{\mathbb{Q}\in {\mathcal Q}^{\varphi^t, {\boldsymbol \Psi}^t}_{h'_{t}} } \bE_{\mathbb{Q}}\left[ \left(\prod_{s=t}^{T-1}(1+r+\varphi_s Z_{s+1})\right)^{1-\gamma}\right], & \gamma>1.  \end{cases}
$$
Next, following the ideas presented in \cite{BrandtEtAl2005}, we will  prove that for $t\in \cT$ and any $c\in[a,b]$ the ratio $W_t(v,c)/v^{1-\gamma}$ does not depend on $v$, and that the functions $\widetilde{W}_t$ defined as $\widetilde{W}_t(c)=W_t(v,c)/v^{1-\gamma}$ satisfy the following backward recursion
\begin{equation}\label{eq:BellmanEx2-1}
\begin{cases}
 \widetilde W_T(c) = \frac{1}{1-\gamma}, \\
 \widetilde W_t(c) = \inf_{a\in{A}} \sup_{\mu\in\tau_\alpha(t,c)}
 \bE\left[(1+r+a (\mu+\sigma \varepsilon_{t+1}))^{1-\gamma}\widetilde W_{t+1}(\frac{t}{t+1}c +
 \frac{1}{t+1} ( \mu+\sigma \varepsilon_{t+1}) ) \right], \ t\in \cT'.
\end{cases}
\end{equation}
We will show this by backward induction in $t$. First, the equality $\widetilde W_T(c) = \frac{1}{1-\gamma}$ is obvious. Next, we fix  $t\in \cT'$, and we assume that $W_{t+1}(v,c)/v^{1-\gamma}$ does not depend on $v$. Thus, using \eqref{eq:BellmanEx1} we obtain
\[
\frac{W_{t}(v,c)}{v^{1-\gamma}}= \inf_{a\in{ A}} \sup_{\mu\in\tau_\alpha(t, c)}
\bE\left[ (1+r+a(\mu +\sigma \varepsilon_{t+1}))^{1-\gamma}\widetilde W_{t+1}\left(T(t,v,c,a,\mu+\sigma \varepsilon_{t+1})\right)\right]
\]
does not depend on $v$ since $\widetilde W_{t+1}$ does not depend on its first argument.

We finish this example with several remarks regarding the numerical implementations aspects of this problem.
By considering CRRA utility functions, and with the help of the above substitution, we reduced the original recursion problem \eqref{eq:BellmanEx1} to \eqref{eq:BellmanEx2-1} which has a lower dimension, and which consequently significantly reduces the  computational complexity of the problem.

In the next section we compare  the strategies (and the corresponding wealth process) obtained by the adaptive robust method,  strong robust methods, adaptive control method, as well as by considering the case of no model uncertainty when the true model is known.

Assuming that the true model is known, the trading strategies are computed by simply solving the optimization problem \eqref{eq:gen-1}, with its corresponding Bellman equation
\begin{equation}\label{eq:Ex1BellmanTrueModel}
\begin{cases}
  \widetilde W_T = \frac{1}{1-\gamma}, \\
 \widetilde W_t = \inf_{a\in{A}}  \bE\left[(1+r+a ( \mu^{*}+\sigma \varepsilon_{t+1}))^{1-\gamma}\widetilde W_{t+1})
   \right], \ t\in \cT'.
\end{cases}
\end{equation}
Similar to the derivation of \eqref{eq:BellmanEx2-1}, one can show that the Bellman equation for the robust control problem takes the form
\begin{equation}\label{eq:Ex1BellmanStrongRobust}
\begin{cases}
  \widetilde W_T = \frac{1}{1-\gamma}, \\
 \widetilde W_t = \inf_{a\in{A}} \sup_{ \mu\in\boldsymbol\Theta}
  \bE\left[(1+r+a ( \mu+\sigma \varepsilon_{t+1}))^{1-\gamma}\widetilde W_{t+1}
   \right], \ t\in \cT'.
\end{cases}
\end{equation}
Note that the Bellman equations \eqref{eq:Ex1BellmanTrueModel} and  \eqref{eq:Ex1BellmanStrongRobust} are recursive scalar sequences that can be computed numerically efficiently  with no state space discretization required.
The adaptive control strategies are obtained by solving, at each time iteration $t$, a Bellman equation similar to \eqref{eq:Ex1BellmanTrueModel}, but by iterating backward up to time $t$ and where $\mu^{*}$ is replaced by its estimated value $\widehat{\mu}_{t}$.

To solve the corresponding adaptive control problem, we first perform the optimization phase. Namely, we solve the Bellman equations \eqref{eq:Ex1BellmanTrueModel} with $\mu^*$ replaced by $\mu$, and for all $\mu\in\boldsymbol\Theta$. The optimal selector is denoted by $\varphi_t^\mu, \ t\in\cT'$. Next, we do the adaptation phase. For every $t\in \{0,1,2,\ldots,T-1\},$ we compute the pointwise estimate $\widehat\mu_t$ of $\theta ^*$, and apply the certainty equivalent control  $\varphi_t=\varphi^{\widehat \mu_t}_t$. For more details see, for instance, \cite{KumarVaraiya2015Book}, \cite{ChenGuo1991-Book}.

\smallskip\noindent
\textit{Case II.} Assume that both $\mu$ and $\sigma$ are the unknown parameters, and thus in the notations of Section~\ref{sec:robust}, we have $\theta^*=(\mu^*,(\sigma^*)^2)$, $\theta=(\mu,\sigma^2)$, $\boldsymbol\Theta= [\underline{\mu},\overline{\mu}]\times[\underline{\sigma}^2,\overline{\sigma}^2]\subset\bR\times\bR_+$, for some fixed $\underline{\mu},\overline{\mu}\in\bR$ and $\underline{\sigma}^2, \overline{\sigma}^2\in\bR_+$.
Similar to the Case~I, we take the MLEs for $\mu^*$ and $(\sigma^*)^2$, namely the sample mean and respectively the sample variance, projected appropriately to the rectangle $\boldsymbol\Theta$.  It is shown in \cite{BCC2016}  that the following recursions hold true
\begin{align*}
   \widetilde{\mu}_{t+1} &=\frac{t}{t+1}\widehat{\mu}_{t}+\frac{1}{t+1} Z_{t+1}, \\
    \widetilde{\sigma}_{t+1}^2 &=\frac{t}{t+1}\widehat{\sigma}^2_{t}+\frac{t}{(t+1)^{2}}(\widehat{\mu}_{t}- Z_{t+1})^2,\\
    (\widehat{\mu}_{t+1},\widehat{\sigma}_{t+1}^2)&=\mathbf{P}(\widetilde{\mu}_{t+1},\widetilde{\sigma}_{t+1}^2), \quad t=1,\ldots,T-1,
  \end{align*}
with some initial guess $\widehat{\mu}_0=c_0'$, and $\widehat{\sigma}_0^2=c_0''$, and where $\mathbf{P}$ is the projection\footnote{We refer to \cite{BCC2016} for precise definition of the projection $\boldsymbol P$, but essentially it is defined as the closest point in the set $\boldsymbol\Theta$.} defined similarly as in \eqref{eq:esti_proj}.
Consequently, we put $C_t=(C_t',C_t'')=(\widehat\mu_t, \widehat\sigma^2_t), t\in\cT$, and respectively we have
\[
R(t,c,z)=\mathbf{P}\left(\frac{t}{t+1}c'+\frac{1}{t+1}z,\frac{t}{t+1}c''+\frac{t}{(t+1)^{2}}(c'-z)^2\right),
\]
with $c=(c',c'')$. Thus, in this case, we take
$$
\mathbf{T}(t,v,c,a,z) = \left(v(1+r+az), \frac{t}{t+1}c'+\frac{1}{t+1}z, \frac{t}{t+1}c''+\frac{t}{(t+1)^{2}}(c'-z)^2 \right).
$$
It is also shown in \cite{BCC2016} that here the $(1-\alpha)$-confidence region  for $(\mu^*,(\sigma^*)^2)$ at time $t$ is an ellipsoid given by
\begin{align*}
\mathbf{\Theta}_t=\tau(t,\widehat \mu_t,\widehat \sigma^2_t), \quad
\tau(t,c)=\left\{ c=(c',c'')\in\bR^2 \ : \ \frac{t}{c''}(c'-\mu)^2 + \frac{t}{2 (c'')^2}(c'' -\sigma^{2})^2\leq \kappa\right\},
\end{align*}
where $\kappa$ is the $(1-\alpha)$ quantile of the $\chi^2$ distribution with two degrees of freedom.
Given all the above, the adaptive robust Bellman equations are derived by analogy to \eqref{eq:BellmanEx1}-\eqref{eq:BellmanEx2-1}.
Namely, $\widetilde W_T(c) = \frac{1}{1-\gamma}$ and, for any $t\in \cT'$,
\begin{align}\label{eq:Bellman2D}
\widetilde W_t(c) =  \sup_{a\in A} & \inf_{(\mu, \sigma^{2})\in\tau(t,c)}
  \bE\Big[(1+r+a ( \mu+\sigma \varepsilon_{t+1}))^{1-\gamma} \\
&  \times \widetilde W_{t+1}\left(\frac{t}{t+1}{c'} + \frac{1}{t+1} ( \mu+ \sigma \varepsilon_{t+1}), \frac{t}{t+1}c''+\frac{t}{(t+1)^{2}}(c'-(\mu+\sigma \varepsilon_{t+1}))^2\right) \Big].  \nonumber
\end{align}
The Bellman equations for the true model, and strong robust method are computed  similarly to \eqref{eq:Ex1BellmanTrueModel} and  \eqref{eq:Ex1BellmanStrongRobust}.

\subsection{Numerical Studies}\label{sec:numerical_studies}
In this section,  we compute the terminal wealth generated by the optimal adaptive robust controls for both cases that are discussed in Section~\ref{sec:examples}.  In addition, for these two cases, we compute the terminal wealth by assuming that the true parameters $\mu^*$ and $(\sigma^*)^2$  are known, and then using respective optimal controls; we call this the true model control specification. We also compute the terminal wealth obtained from using the optimal adaptive controls. Finally,  we compute the terminal wealth using optimal robust controls, which, as it turns out, are the same in Case I as the optimal  strong robust controls.  We perform an analysis of the results and we compare the four methods that are considered.

 In the process, we first numerically solve the respective Bellman equation for each of the four considered methods. This is done by backward induction, as usual.

Note that expectation operator showing in the Bellman equation is just an expectation over the standard normal distribution. For all considered control methods and for all simulation conducted, we approximate the standard normal distribution by a $10$-points optimal quantizer (see, e.g., \cite{PagesPrintems2003}).

  The Bellman equations for the true model control specification and for the adaptive control are essentially the same, that is \eqref{eq:Ex1BellmanTrueModel}, except that, as already said above, in the adaptive control implementation, we use the certainty equivalent approach: at time $t$ the Bellman equation is solved for the time-$t$ point estimate of the parameter $\mu^*$ (in Case I), or for the time-$t$ point estimates of the parameters $\mu^*$ and $(\sigma^*)^2$ (in Case II).

In the classic robust control application the Bellman equation \eqref{eq:Ex1BellmanStrongRobust} is solved as is normally done in the dynamic min-max game problem, in both Case I and Case II.

In the adaptive robust control application the Bellman equation \eqref{eq:BellmanEx2-1} is a recursion on a real-valued cost-to-go function $\widetilde W$. In Case I, this recursive equation is numerically solved  by discretizing the state space associated with state variable $\widehat  \mu_{t}$. In our illustrative example, this discretization  of the state space has been done  by simulating sample paths of the state process $\widehat  \mu_{\cdot}$ up to horizon time $T$.  This simulation has been made under the true model\footnote{Of course, this cannot be done if the control method is applied on genuinely observed market data since the market model is (by nature) not known. One possible solution would be first to estimate the model parameters based on a past history of $Z$ and second to generate sample paths of the state process $\widehat  \mu_{\cdot})$ according to this estimated model.}. At each time $t$, the state space grid has been defined as the collection of sample values taken by this process at time $t$. Analogous procedure has been applied in Case II with regard to state variables $\widehat  \mu_{t}$ and  $\widehat \sigma^2_{t}$.

\noindent\textbf{Case I.} In order to implement adaptive robust control method for solving the optimal allocation problem, we start by constructing a grid in both time and space. The grid consists of a number of simulated paths of $\hat{\mu}$. Then, we solve equations~\eqref{eq:BellmanEx2-1} at all grid points for the optimal trading strategies.

As stated above, the implementation  of the adaptive control method includes two steps. First, for each $\mu$ in the uncertain set $\boldsymbol \Theta=[\underline{\mu},\overline{\mu}]$, we solve the following Bellman equations:
\begin{equation*}
\begin{cases}
  \widetilde W_T = -\frac{1}{1-\gamma}, \\
 \widetilde W_t = \inf_{a\in{A}}\bE\left[\widetilde W_{t+1}(1+r+a (\mu+\sigma \varepsilon_{t+1}))^{1-\gamma} \right], \ t\in \cT'.
\end{cases}
\end{equation*}
It is clear that at each time $t$, the optimal control is parameterized by  $\mu$.
With this in mind, at each time $t\in\cT'$, we choose the control value with $\hat{\mu}_t$ replacing $\mu$ in the formula for the optimal control.

Under classical robust control method, the investor's problem becomes
\begin{equation}\label{eq:num-robust}
  \inf_{\varphi\in\cA}\sup_{\theta\in\boldsymbol \Theta}\bE_{\theta}[-u(V_T)]=-v_0^{1-\gamma}\sup_{\varphi\in\cA}\inf_{\mu\in[\underline{\mu},\overline{\mu}]}\bE\left[\frac{1}{1-\gamma}\left(\prod_{t=0}^{T-1}(1+r+\varphi_t(\mu+\sigma\varepsilon_{t+1}))\right)^{1-\gamma}\right].
\end{equation}
The inner infimum problem in \eqref{eq:num-robust} is attained  at $\mu=\underline{\mu}$ as long as $1+r+\varphi_t(\mu+\varepsilon_{t+1})\geq0$ for each $t\in\cT'$.
Such condition will be satisfied for rather reasonable choice of the risk free rate $r$ and quantization of $\varepsilon_{t+1}$.
Accordingly, the robust control problem becomes
\begin{eqnarray*}
  \inf_{\varphi\in\cA}\sup_{\theta\in\boldsymbol \Theta}\bE_{\theta}[-u(V_T)]=\frac{v_0^{1-\gamma}}{1-\gamma}\inf_{\varphi\in\cA}\bE\left[-\left(\prod_{t=0}^{T-1}(1+r+\varphi_t(\underline{\mu}+\sigma\varepsilon_{t+1}))\right)^{1-\gamma}\right].
\end{eqnarray*}
The corresponding Bellman equation becomes
\begin{equation}\label{eq:rb-bellman}
\begin{cases}
  \widetilde W_T = -\frac{1}{1-\gamma}, \\
 \widetilde W_t = \inf_{a\in{A}}\bE\left[\widetilde W_{t+1}(1+r+a (\underline\mu+\sigma \varepsilon_{t+1}))^{1-\gamma} \right], \ t\in \cT'.
\end{cases}
\end{equation}
We compute the robust optimal strategy by solving equation~\eqref{eq:rb-bellman} backwards.

It can be shown that for this allocation problem, the strong robust control  problem~\eqref{prob1R-strong} is also solved via the Bellman equation~\eqref{eq:rb-bellman}. Hence, in this case, strong robust control method and robust control method provide the same result.


For numerical study we choose the parameter set as $\boldsymbol\Theta=[-1,1]$, and we consider a set of time horizons $T=0.1, 0.2, \ldots, 0.9, 1$.
The other parameters are chosen as follows
\begin{eqnarray*}
    V_0=100,\quad r=0.02,\quad \alpha=0.1 ,\quad \gamma=5,\quad \sigma=0.3,\quad \mu^*=0.07, \quad \widehat \mu_0=0.1.
\end{eqnarray*}
For every $T$, we compute the terminal wealth $V_T$ generated by application of the optimal strategies corresponding to four control methods mentioned above: adaptive robust, classical robust, adaptive and the optimal control (assuming the true parameters are known in the latter case).  In each method we use 1000 simulated paths of the risky asset and $300T$ rebalancing time steps.  Finally, we use the acceptability index Gain-to-Loss Ratio (GLR)
$$
\textrm{GLR}(V)=
\begin{cases}
  \frac{\bE_{\theta^*}[e^{-rT}V_T-V_0]}{\bE_{\theta^*}[(e^{-rT}V_T-V_0)^-]}, \quad & \bE_{\theta^*}[e^{-rT}V_T-V_0]>0,\\
 0, \quad &\text{otherwise,}
\end{cases}
$$
and 95\% Value-at-Risk, $\var(V_T)=\inf\{v\in\bR: \bP_{\theta^*}(V_T+v<0)\leq95\%\}$,
to compare the performance of every method.

\begin{figure}[!ht]
    \centering
    \begin{subfigure}[t]{0.48\textwidth}
        \centering
        \includegraphics[width=\linewidth]{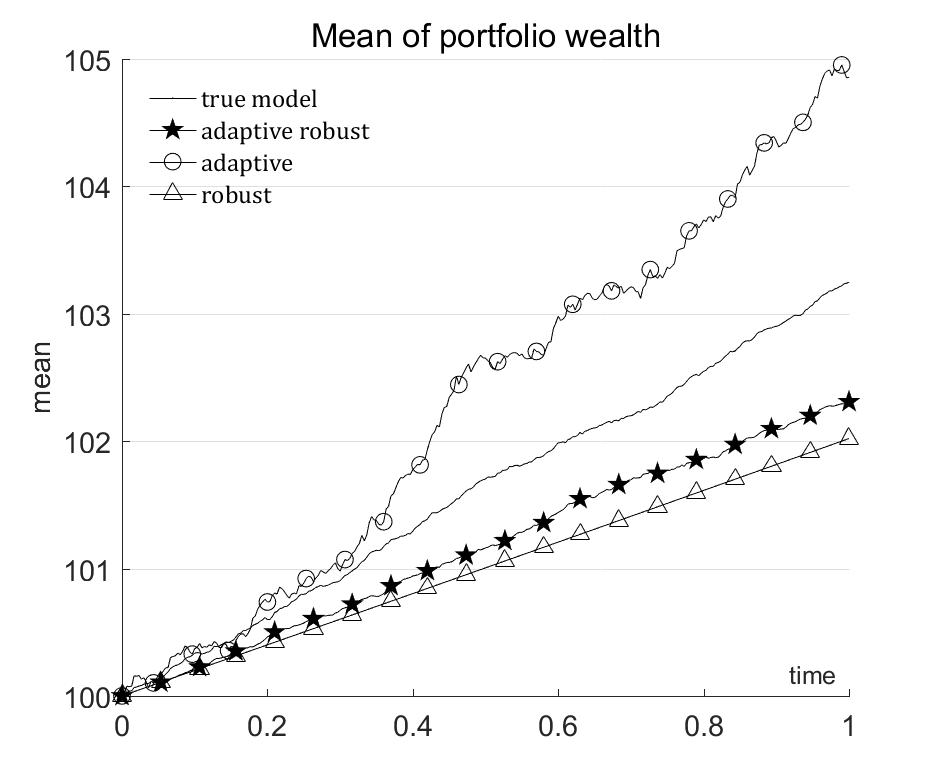}
    \end{subfigure}
    \hfill
    \begin{subfigure}[t]{0.48\textwidth}
        \centering
        \includegraphics[width=\linewidth]{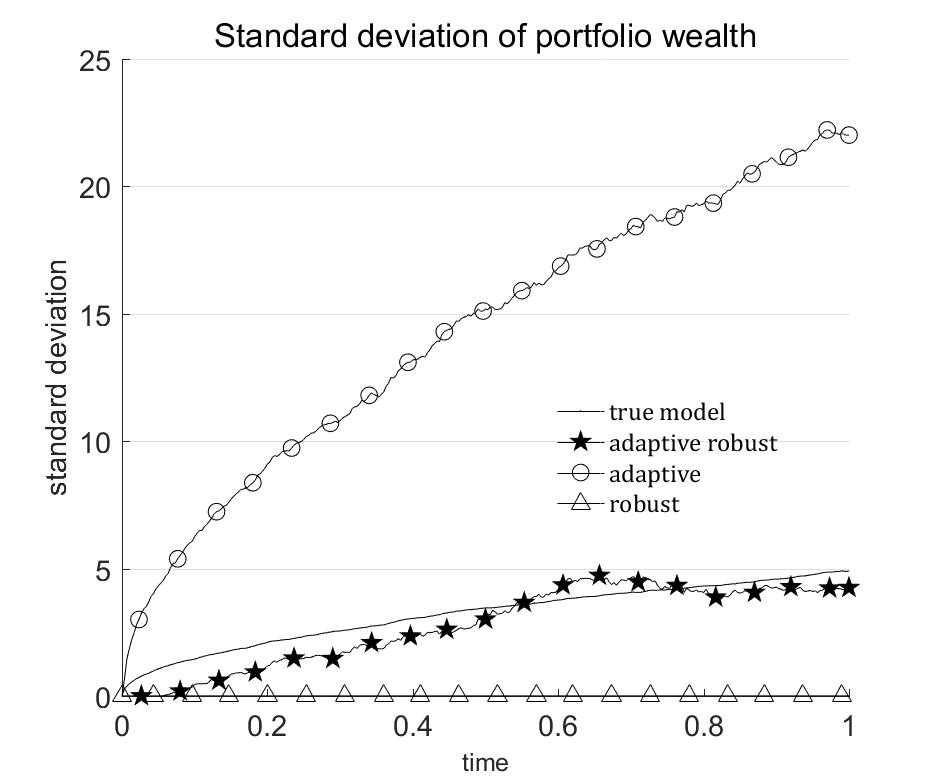}
    \end{subfigure}
    \caption{Time-series of portfolio wealth means, and standard deviations. Unknown mean.}
    \label{fig:PortValueMeanAndStDev-1d}
\end{figure}

\begin{figure}[!ht]
       \centering
    \begin{subfigure}[t]{0.48\textwidth}
        \centering
        \includegraphics[width=\linewidth]{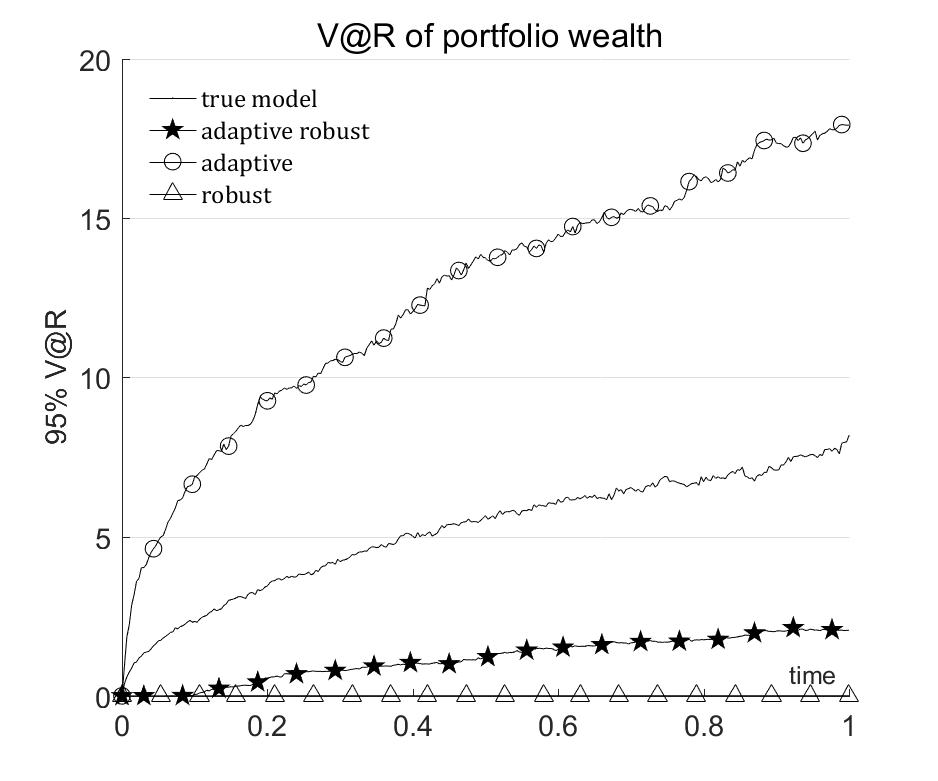}
    \end{subfigure}
    \hfill
    \begin{subfigure}[t]{0.48\textwidth}
        \centering
        \includegraphics[width=\linewidth]{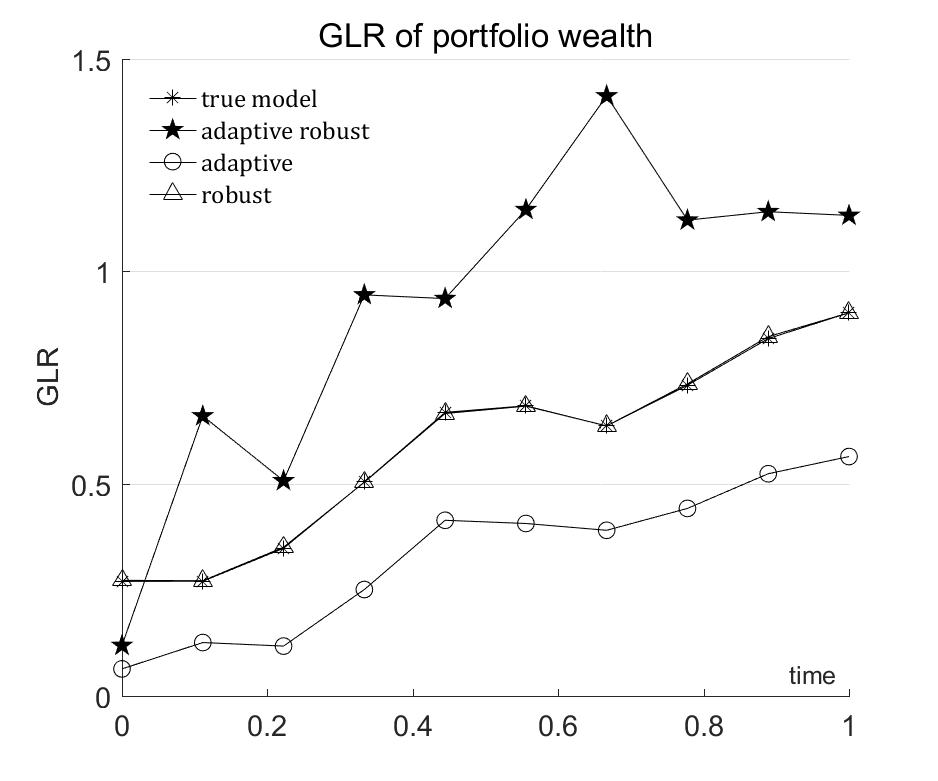}
    \end{subfigure}
\caption{Time-series of portfolio $\var$ and $\glr$. Unknown mean.}
\label{fig:PortValueVaRAndGLR-1d}
\end{figure}

 According to Figure~\ref{fig:PortValueVaRAndGLR-1d}, it is apparent that adaptive robust method has the best performance among the considered methods. GLR in case of  adaptive robust control  is higher than in case of classical robust control and than in case of adaptive control for all the terminal times except for $T=0.1$. Moreover, even though the adaptive control produces the highest mean terminal wealth (cf. Figure~\ref{fig:PortValueMeanAndStDev-1d}), the adaptive control is nevertheless the most risky method in the sense that the corresponding terminal wealth has the highest standard deviation and value at risk. The reason behind such phenomenon is, arguably,  that adaptive control method uses the point estimator while solving the optimization problem, so it can be overly aggressive and offers no protection against the estimation error which always exists.

Optimal portfolio wealth corresponding to classical robust method is the lowest among all the four approaches analyzed. This is not surprising since classical robust control  is designed to deal with the worst case scenario. Therefore, as illustrated by Figure~\ref{fig:portRiskyAssetAlloc-1d}, optimal holdings in the risky asset given by this method are always 0, which means that an investor following the classical robust method puts all the money in the banking account and, thus, fails to benefit from the price rise of the risky asset.

\begin{figure}[!ht]
\centering
  \includegraphics[scale=0.30]{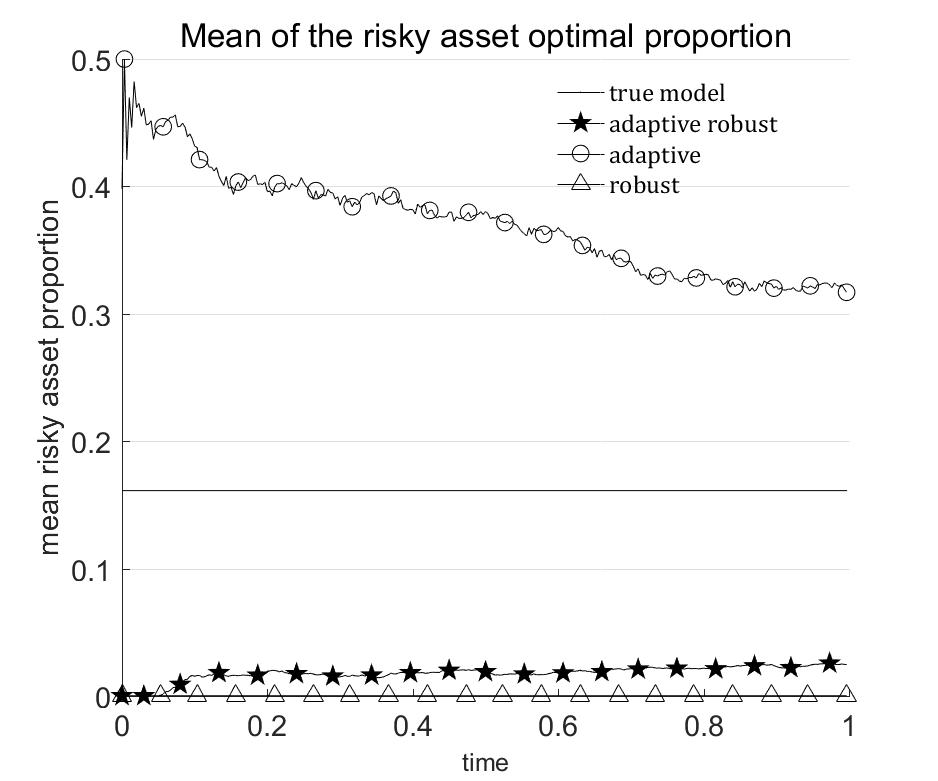}
  \caption{Time-series of optimal strategies means. Unknown mean.}
  \label{fig:portRiskyAssetAlloc-1d}
\end{figure}

Adaptive robust control method is meant to find the right balance between being aggressive and conservative.
As shown in Figures \ref{fig:PortValueMeanAndStDev-1d} and \ref{fig:PortValueVaRAndGLR-1d}, adaptive robust produces higher terminal wealth than classical robust, and it bears lower risk than adaptive control.
The robustness feature of the adaptive robust control method succeeds in controlling  the risk stemming from the model uncertainty. Moreover, the learning feature of this method prevents it from being too conservative.

\noindent\textbf{Case II.} Here, the adaptive robust control  method, the classical robust control method and the adaptive control method need to account uncertainty regarding the true parameter $\theta^*=(\mu^*,(\sigma^*)^2)$.

\begin{figure}[!ht]
\centering
  \includegraphics[scale=0.30]{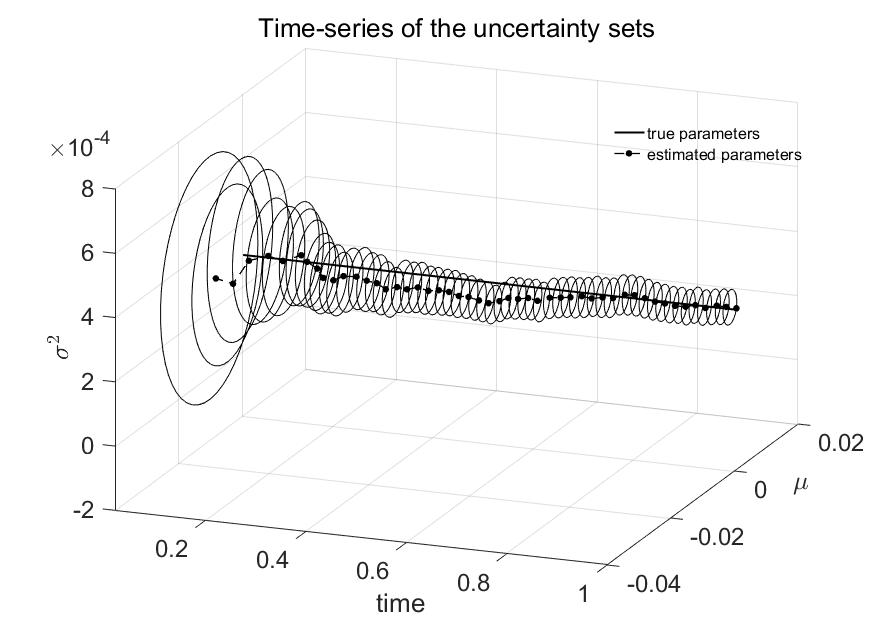}
\caption{Times-series of the confidence regions $\tau(t,{\widehat  \mu_{t}},{\widehat  \sigma^{2}_{t}} )$ for one particular path of $({\widehat  \mu_{t}},{\widehat  \sigma^{2}_{t}})$ at confidence level $\alpha=10\%$.}
\label{fig:TimeSeriesConfidenceRegion}
\end{figure}

We choose the parameter set as $\boldsymbol\Theta=[-1,1]\times[0,0.5]$.
As in Case I we consider a set of time horizons $T=0.1, 0.2, \ldots, 0.9, 1$, and $300T$ time iterations (or rebalancing dates) evenly distributed over the time horizon $T$.
Construction of the discretized state space (required in the adaptive control method) and application of the computed optimal strategies are made over $1000$ sample paths of the true model.
The other parameters are chosen as follows
\begin{eqnarray*}
    V_0=100,\quad r=0.02,\quad \alpha=0.1,\quad \gamma=20,\quad \mu^*=0.09,\quad \sigma^{*} = 0.30, \quad \widehat \mu_0=0.1,\quad \widehat \sigma_{0} = 0.4.
\end{eqnarray*}

Figure \ref{fig:TimeSeriesConfidenceRegion} shows the evolution of uncertainty sets for a particular sample path of the true model. We can show that the uncertainty on the true parameter $(\mu^{*}, \sigma^{*})$ quickly reduces in size as realized excess returns are observed through time. Moreover, we can notice that for $\alpha = 10\%$, the ellipsoid regions contains the true parameter for nearly  all time steps.

As in Case I we compared  performance of adaptive robust control, adaptive control, robust control and optimal control, by simulating paths generated by the true model.  Figure \ref{fig:PortValueMeanAndStDev-2d} shows how mean and standard deviation of optimal portfolio wealth values evolve through time. It also displays time-evolution of unexpected loss 95\%-VaR and Gain-to-Loss Ratio (GLR). In this 2-dimensional case, the conclusion are in line with the previous case I where only $\mu$ were assumed  unknown. The adaptive robust strategy outperforms adaptive control and robust control strategies in terms of GLR. Adaptive control strategy gives the highest portfolio wealth mean but at the cost of relatively high standard deviation. The unexpected loss 95\%-VaR series is also not in favor of the adaptive control approach.

Finally, we want to mention that optimal holdings in the risky asset in this case exhibit similar behaviour as in Case I.

\begin{figure}[!ht]
    \centering
    \begin{subfigure}[t]{0.48\textwidth}
        \centering
        \includegraphics[width=\linewidth]{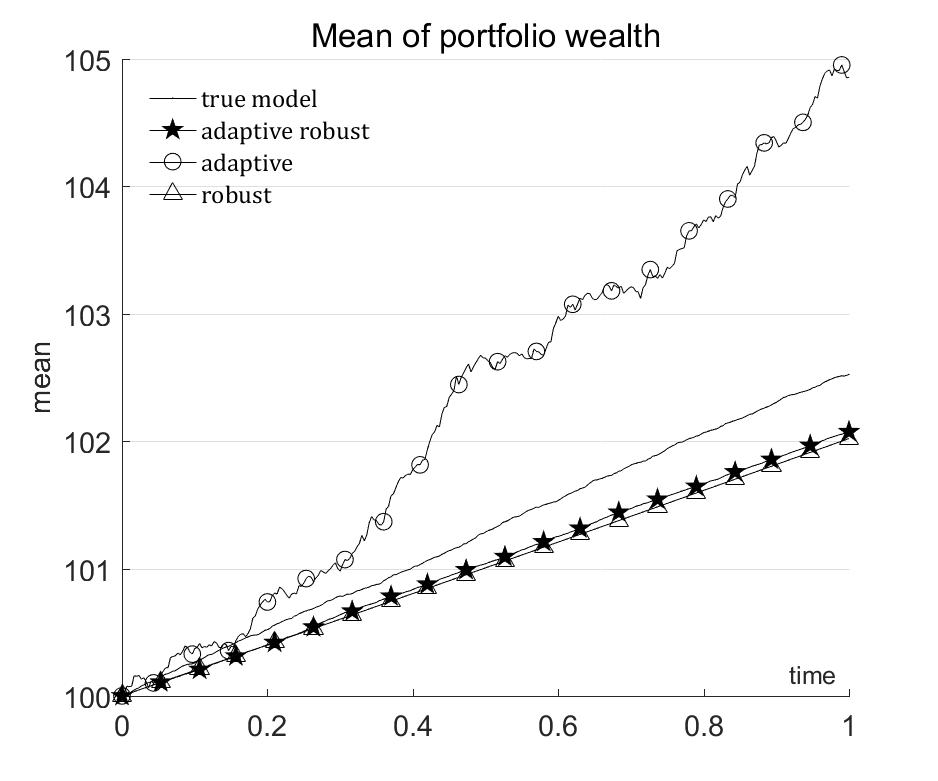}
    \end{subfigure}
    \hfill
    \begin{subfigure}[t]{0.48\textwidth}
        \centering
        \includegraphics[width=\linewidth]{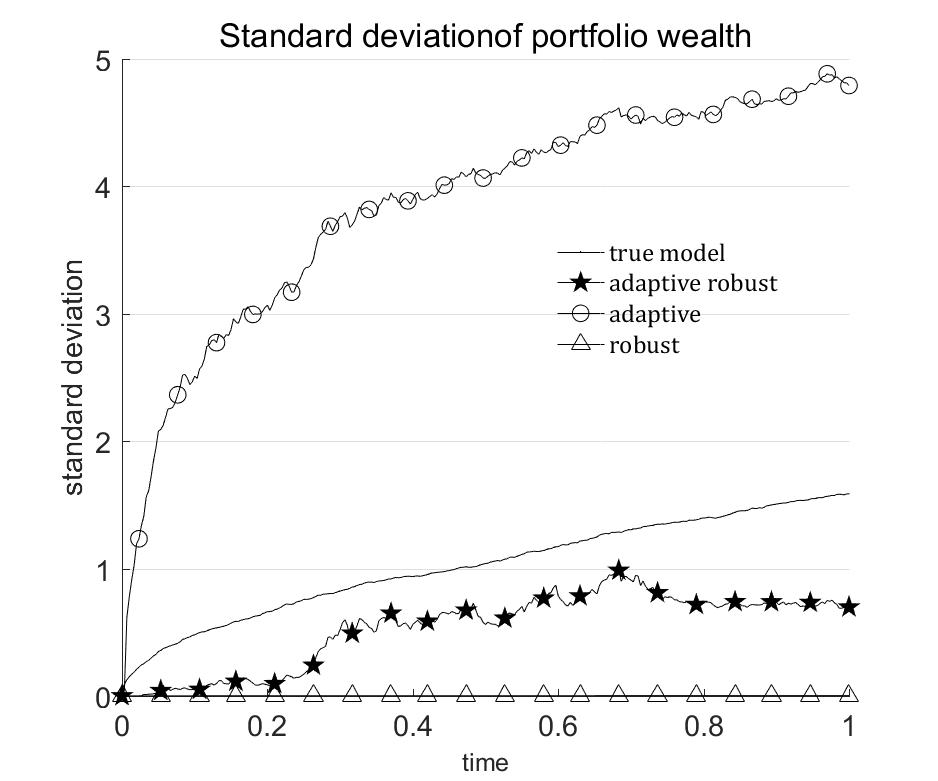}
    \end{subfigure}
    \caption{Time-series of portfolio wealth means, and standard deviations. Unknown mean and variance.}
    \label{fig:PortValueMeanAndStDev-2d}
\end{figure}

\begin{figure}
       \centering
    \begin{subfigure}[t]{0.48\textwidth}
        \centering
        \includegraphics[width=\linewidth]{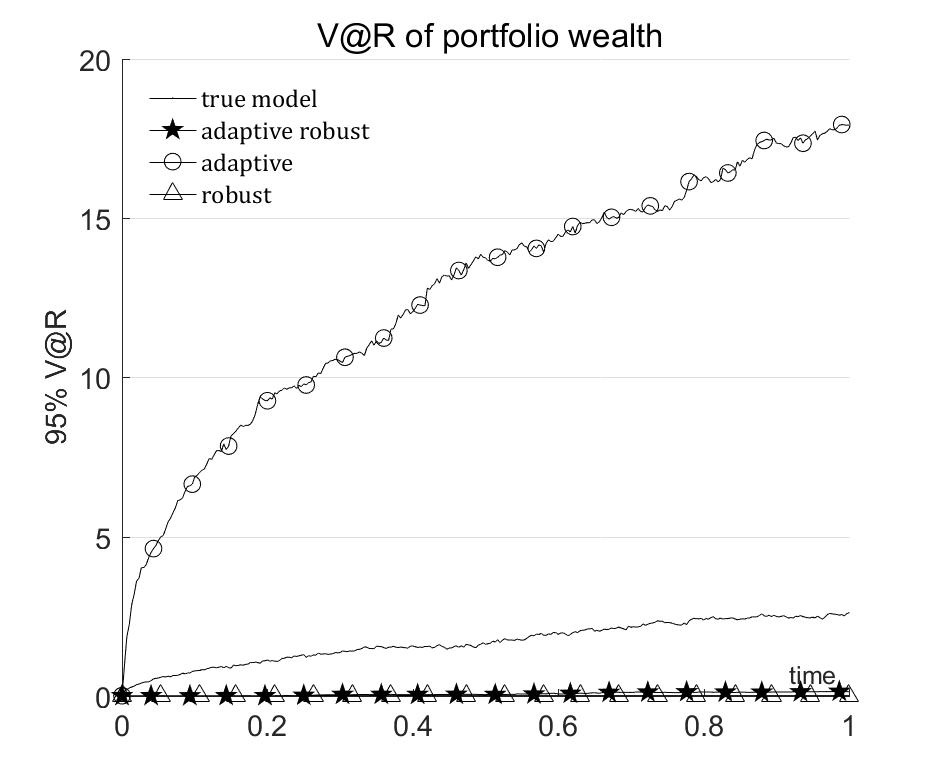}
    \end{subfigure}
    \hfill
    \begin{subfigure}[t]{0.48\textwidth}
        \centering
        \includegraphics[width=\linewidth]{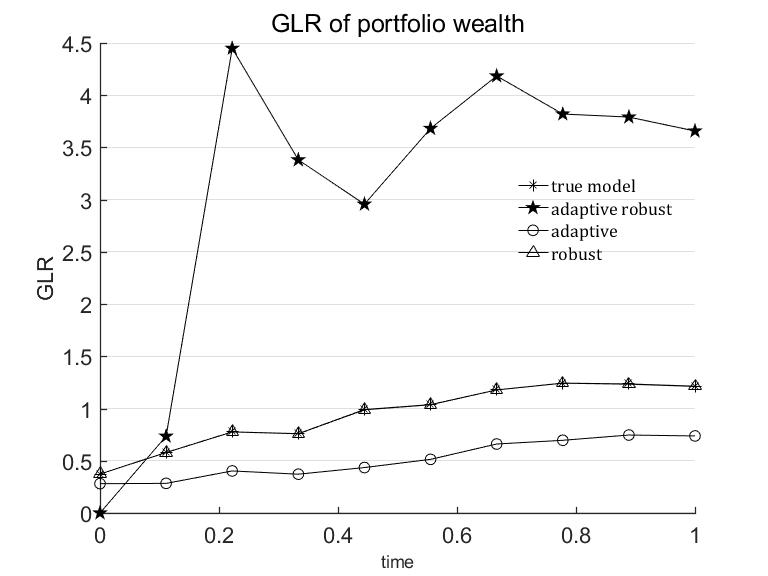}
    \end{subfigure}
\caption{Time-series of portfolio $\var$ and $\glr$. Unknown mean and variance.}
\label{fig:PortValueVaRAndGLR-2d}
\end{figure}

\section*{Acknowledgments}
Part of the research was performed while Igor Cialenco was visiting the Institute for Pure and Applied Mathematics (IPAM), which is supported by the National Science Foundation. The research of Monique Jeanblanc is  supported by  `Chaire Markets in transition', French Banking Federation  and ILB, Labex ANR 11-LABX-0019.

\bibliographystyle{alpha}

\end{document}